\documentclass[a4paper,12pt]{article}


\usepackage{amsmath, wrapfig}
\usepackage{dsfont, a4wide, amsthm, amssymb, amsfonts, graphicx}
\usepackage{fancyhdr, xspace, psfrag, setspace, supertabular, color, enumerate}
\usepackage{hyperref}

\setcounter{MaxMatrixCols}{20}

\newtheorem{theorem}{Theorem}[section]
\newtheorem{proposition}[theorem]{Proposition}

\newtheorem{corollary}[theorem]{Corollary}

\newtheorem{definition}[theorem]{Definition}

\newtheorem{example}[theorem]{Example}
\newtheorem{question}[theorem]{Question}

\def\a{\alpha}
\def\b{\beta}
\def\g{\gamma}

\def\e{\epsilon}

\def\cf{\mathcal{F}}
\def\ct{\mathcal{T}}

\def\F{\mathbb{F}}

\DeclareMathOperator{\tr}{tr}

\DeclareMathOperator{\pr}{pr}
\DeclareMathOperator{\Rrk}{\mathit{R}rk}

\DeclareMathOperator{\Rwedgerk}{(\mathit{R}_{1} \wedge \mathit{R}_{2})rk}

\DeclareMathOperator{\Ronerk}{\mathit{R}_{1}rk}
\DeclareMathOperator{\Rtwork}{\mathit{R}_{2}rk}

\DeclareMathOperator{\Rirk}{\mathit{R}_{i}rk}
\DeclareMathOperator{\Rhrk}{\mathit{R_{[h]}}rk}

\title{The interplay between bounded ranks of tensors arising from partitions}
\author{Thomas Karam\footnote{Mathematical Institute, University of Oxford. Email: \texttt{thomas.karam@maths.ox.ac.uk}.}}

\begin{document}
\maketitle

\begin{abstract}

Let $d \ge 2, h \ge 1$ be integers. Using a fragmentation technique, we characterise $(h+1)$-tuples $(R_1, \dots, R_h, R)$ of non-empty families of partitions of $\{1, \dots, d\}$ such that it suffices for an order-$d$ tensor to have bounded $R_i$-rank for each $i=1,\dots,h$ for it to have bounded $R$-rank. On the way, we prove power lower bounds on products of identity tensors that do not have rank $1$, providing a qualitative answer to a question of Naslund.

\end{abstract}

\tableofcontents

\section{Introduction}

Throughout this paper, the notation $\F$ will be used to denote a field, and all our statements will be uniform with respect to the choice of the field. If $d \ge 1$ is an integer, and $n_1, \dots, n_d$ denote positive integers then we say that a function from $[n_1] \times \dots \times [n_d]$ to $\F$ is an \emph{order-$d$} tensor. For a fixed value of $d$, our statements will also be uniform with respect to the integers $n_1, \dots, n_d$ unless they explicitly appear in the relevant statement, and we will not redefine these integers.

\subsection{Ranks of tensors arising from partitions}

An order-$2$ tensor is a matrix, and ranks on matrices have been extensively studied. For higher-order tensors, there is no one single canonical generalisation of the matrix rank, and the interesting notion of rank instead depends on the application that one has in mind. Nonetheless, the definitions of many of them are similar in that for every nonnegative integer $k$, rank-$k$ tensors are defined in the same way in terms of rank-$1$ tensors, and only the sets of rank-$1$ tensors differ. Before discussing these notions of rank further, let us use that to define them in a unified way. Whenever $x$ is an element of $[n_1] \times \dots \times [n_d]$ and $J$ is a subset of $[d]$, we write $x(J)$ for the restriction of $J$ to its coordinates in $J$, that is, for the element $y$ of $\prod_{j \in J} [n_j]$ defined by $y_j = x_j$ for every $j \in [d]$.

\begin{definition} \label{Rrk definition} Let $d \ge 1$ be an integer, and let $R$ be a non-empty family of partitions of $\lbrack d \rbrack$. We say that an order-$d$ tensor $T$ has \emph{$R$-rank at most $1$} if there exist a partition $P \in R$ and for each $J \in P$ a function $a_J: \prod_{j \in J} [n_j] \rightarrow \mathbb{F}$ such that \[ T(x_1,\dots ,x_d) = \prod_{J \in P} a_J(x(J)) \] is satisfied for every $(x_1,\dots,x_d) \in [n_1] \times \dots \times [n_d]$. We say that the \emph{$R$-rank} of $T$ is the smallest nonnegative integer $k$ such that there exist order-$d$ tensors $T_1,\dots ,T_k$ each with $R$-rank at most $1$ and satisfying \[T = T_1 + \dots  + T_k.\] \end{definition}

We emphasize that the partition $P$ is not required to be the same for the tensors $T_1, \dots, T_k$. For instance, if $d=3$ and \[R = \{ \{\{1\}, \{2,3\}\}, \{\{2\}, \{1,3\}\} \},\] then a tensor of the type \[T(x_1,x_2,x_3) = a(x_1) b(x_2,x_3) + c(x_2) d(x_1,x_3)\] always has $R$-rank at most $2$.

We shall denote by $\Rrk T$ the $R$-rank of $T$. There are some choices of the family of partitions $R$ for which the corresponding notion of $R$-rank has received particular attention. We now briefly recall some of these notions and refer the reader to \cite{Tao}, \cite{Naslund}, \cite{Lovett}, \cite{K. decompositions} for a more complete discussion of the history and applications of the middle two.

We note that if $d=1$, then the family $\{\{\{1\}\}\}$ is the only non-empty family $R$ of partitions of [d], and every order-$1$ tensor hence has $R$-rank either $0$ (if it is the zero tensor) or $1$ (otherwise).

\begin{itemize}

\item The \emph{tensor rank}, corresponding to \[R = \{\{\{1\},\{2\},\dots, \{d\}\}\}\] has been heavily used in computational complexity theory.

\item The \emph{slice rank}, corresponding to \[R = \{\{\{1\},\{1\}^c\}, \{\{2\},\{2\}^c\}, \dots , \{\{d\},\{d\}^c\}\}\] was defined by Tao \cite{Tao} in 2016, in a reformulation of the breakthrough of Ellenberg and Gijswijt (building on work of Croot, Lev, and Pach) on the cap-set problem.

\item The \emph{partition rank}, corresponding to \[R = \{\{J,J^c\}: J \subset [d], 1 \le |J| \le d-1\}\] was defined by Naslund \cite{Naslund} in order to prove polynomial upper bounds on the size of subsets of $\F_{p^r}^n$ not containing $k$-right corners.

\item The \emph{$j$-flattening rank} for some $j \in [d]$, corresponding to \[R = \{\{\{j\},\{j\}^c\}\}\] also arises naturally from time to time (and also appears implicitly in many arguments involving projections of a tensor to a matrix). As an example of a recent application, the max-flattening rank, obtained by taking the maximum of the flattening ranks over all $j \in [d]$, was recently used by Correia, Sudakov and Tomon \cite{Correia Sudakov Tomon} to generalise the Frankl-Wilson theorem on forbidden intersections.

\end{itemize}

We shall denote the tensor rank and the partition rank by $\tr$ and $\pr$ respectively, and write $R_{\tr}$ and $R_{\pr}$ for the respective corresponding families $R$.

Comparing notions of rank, both qualitatively and quantitatively, is a theme that already pervades the literature on tensors. One fruitful line of enquiry involves the comparison between the partition rank, the definition of which we just recalled, with (in the finite field case) the analytic rank, a notion of rank which does not arise as a special case of Definition \ref{Rrk definition} but instead involves the bias of the associated multilinear form, and with the geometric rank, another notion of rank outside of the scope of Definition \ref{Rrk definition} and which is defined even if the field is infinite. 

The qualitative comparison between the partition and analytic ranks was settled in one direction by Gowers and Wolf \cite{Gowers and Wolf}, and essentially in the other, more difficult direction by Green and Tao in their paper \cite{Green and Tao} on the bias and rank of polynomials. Quantitatively, Lovett later proved \cite{Lovett} the stronger result that the analytic rank is at most the partition rank. Attempts to obtain the correct quantitative bounds in the converse direction then led to successive improvements which come very close to concluding that the partition rank is at most a multiple of the analytic rank: Janzer \cite{Janzer} and Milicevic \cite{Milicevic} independently obtained power bounds, which were then improved to linear bounds in the large fields case by Moshkovitz and Cohen \cite{Moshkovitz and Cohen}, and to quasilinear bounds for all finite fields by Moshkovitz and Zhu \cite{Moshkovitz and Zhu}.

Just before these last two papers the notion of geometric rank was defined by Kopparty, Moshkovitz and Zuiddam \cite{Kopparty Moshkovitz Zuiddam}, originally to prove sharp upper bounds on the border subrank of the matrix multiplication tensors, and was then shown in the same paper to be between the subrank of Strassen \cite{Strassen} and the slice rank. The geometric rank was then re-used in an important way in the previously mentioned improvement \cite{Moshkovitz and Cohen}, after which yet another notion, the local rank, was defined in \cite{Moshkovitz and Zhu}: comparing it to the analytic rank and to the partition rank and combining these comparisons then led to the main achievement of that paper.

Returning to the notions of rank in Definition \ref{Rrk definition}, rather little appears to be known about how they relate to one another, even purely qualitatively, and comparing these notions is the focus of the present paper.

\subsection{Two questions of Naslund}

The four notions of rank that we previously listed are by no means the only notions of rank that are worthwhile to study. As Naslund himself observed, a positive answer to the following question would improve lower bounds for non-commutative circuits, in particular Nisan’s noncommutative formula-size lower bound (\cite{Shpilka Yehudayoff}, Theorem 3.6). 

\begin{question}[\cite{Naslund}, Problem 12] \label{Naslund question on order-4 tensors} Let $R$ be the family of partitions on $[4]$ defined by \[R = \{\{\{1,2\},\{3,4\}\}, \{\{1,3\},\{2,4\}\}\}.\] Does there exist $\e>0$ such that the order-$4$ tensor $T: [n]^4 \to \F$ defined by \[T(x_1,x_2,x_3,x_4) = 1_{x_1=x_4} 1_{x_2=x_3}\] has $R$-rank at least $n^{1+\e}$, at least when $n$ is sufficiently large ? \end{question}

The lower bound of $n$ follows from considering the matrix obtained by fixing both $x_1$ and $x_4$ to be both equal to (say) $1$. Indeed, if we have a decomposition of the type \[\sum_{i=1}^r a_i(x_1,x_2) b_i(x_3,x_4) + \sum_{i=1}^s a_i(x_1,x_3) b_i(x_2,x_4) = 1_{x_1=x_4} 1_{x_2=x_3}\] for some nonnegative integers $r,s$ then in particular we have \[\sum_{i=1}^r a_i(1,x_2) b_i(x_3,1) + \sum_{i=1}^s a_i(1,x_3) b_i(x_2,1) = 1_{x_2=x_3}.\] When viewed as functions $[n_2] \times [n_3] \to \F$, each of the summands of the left-hand side has rank at most $1$ but the right-hand side has rank $n$, so $r+s \ge n$.

As Naslund points out, it follows from a counting argument (in the finite field case) that there exist order-$4$ tensors $[n]^4 \to \F$ which have $R$-rank $\Omega(n^2)$ (and in fact this counting argument shows that this is the case of a proportion of tensors approaching $1$ as $n$ tends to infinity), but no lower bounds of the type $\Omega(n^{1+\e}$) are known for the $R$-rank of any specific tensor.

This situation is in several ways far from an isolated case regarding lower bounds on the ranks of tensors. For instance, can be shown (again by a simple counting argument) that there exist order-$3$ tensors $[n]^3 \to \F$ with tensor rank $\Omega(n^2)$, but providing an example of a tensor for which this is the case is a famous open problem. There again the gap between that lower bound and the best known lower bound for a specific tensor is large. 

Whenever $d \ge 1$ is an integer and $P_1,P_2$ are two partitions of $[d]$, we say that $P_2$ is \emph{finer than} $P_1$ if for every $J_2 \in P_2$ there exists $J_1 \in P_1$ such that $J_2 \subset J_1$. We write this property as $P_2 \prec P_1$. We can check that $\prec$ is a partial order on partitions of $[d]$.

For every integer $d \ge 1$ and every non-empty subset $J$ of $[d]$ we write $\Delta_J: [n]^J \to \F$ for the tensor defined by $\Delta_J(x(J)) = 1$ if $x_{j_1} = x_{j_2}$ whenever $j_1,j_2 \in J$, and by $\Delta_J(x(J)) = 0$ otherwise (in the case where $J$ has size $1$, the tensor $\Delta_J$ is hence in particular identically equal to $1$). For every partition $P$ of $[d]$, we write $\Delta_P:[n]^d \to \F$ for the tensor defined by \[\Delta_P(x_1, \dots, x_d) = \prod_{J \in P} \Delta_J(x(J)).\] 

After asking Question \ref{Naslund question on order-4 tensors}, Naslund also posed the following yet bolder problem. 

\begin{question}[\cite{Naslund}, Problem 13] \label{Naslund question in the general case}

Let $d \ge 2$ be an integer, let $R$ be a non-empty family of partitions of $[d]$, and let $P$ be a partition of $[d]$ that is not finer than any of the partitions of $R$. What is the rank of $\Delta_P$ ?

\end{question}

If instead we assume that $P$ \emph{is} finer than at least one partition $P_0$ of $R$, then there is a short answer: there exists a function $s:P \to P_0$ such that $J \subset s(J)$ for every $J \in P$, and hence we can in particular write \[ \Delta_P(x_1, \dots, x_d) = \prod_{J' \in P_0} (\prod_{J \in s^{-1}(J')} \Delta_J)(x(J')), \] which shows that $T$ has $R$-rank equal to $1$.

In the special case where $P$ is the trivial partition $\{\{[d]\}\}$ the tensor $\Delta_P$ is the usual “identity" tensor $\Delta_d$ defined by \[\Delta_d(x_1, \dots, x_d) = \sum_{i=1}^n 1_{x_1=i} \dots 1_{x_d=i}.\] Naslund showed (\cite{Naslund}, Lemma 11) that the partition rank of $\Delta_d$ is equal to $n$. Because every non-trivial partition of $[d]$ is finer than some bipartition of $[d]$, it follows that whenever $R$ is a non-empty family of partitions of $[d]$ which does not contain the trivial partition $\{[d]\}$ we have $\Rrk T \ge \pr T$ for any order-$d$ tensor $T$, and hence in particular that $\Rrk \Delta_d \ge n$. (If instead $R$ contains the trivial partition, then $\Delta_d$ has $R$-rank equal to $1$, as does any non-zero order-$d$ tensor).

Nonetheless, for reasons which we will explain in more detail in Section \ref{Section: Lower bounds on ranks of products of diagonal tensors}, there does not appear to be a simple way of showing that $\Rrk \Delta_P \ge n$ or even $\Rrk \Delta_P = \Omega(n)$ in the general setting of Question \ref{Naslund question in the general case}, although such a linear bound is plausible to us. 

\subsection{Main results}

Our first result will be a lower bound of power type for Question \ref{Naslund question in the general case}.

\begin{theorem} \label{Diagonal bound theorem} Let $d \ge 2$ be an integer, let $P$ be a partition of $[d]$, and let $R$ be a non-empty family of partitions of $[d]$. If $P$ is not finer than any of the partitions in $R$, then the order-$d$ tensor $\Delta_P$ satisfies $\Rrk \Delta_P \ge n^{c(d)}$ with $c(d) = 3^{-2^{d+1}}$. \end{theorem}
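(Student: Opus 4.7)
The plan is to induct on the order $d$. For the base case $d=2$, the non-refinement hypothesis forces $P=\{\{1,2\}\}$ and $R=\{\{\{1\},\{2\}\}\}$, so $\Delta_P$ is the $n\times n$ identity matrix of rank $n$, which gives the required bound with room to spare since $c(2)<1$.

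For the inductive step, suppose $\Delta_P = \sum_{i=1}^k T_i$ is an $R$-rank decomposition, so each $T_i$ factorises through some $P_{\sigma(i)}\in R$. Since $P$ is not finer than any $P_\alpha\in R$, there exists for each $\alpha$ a part $J_\alpha\in P$ that crosses at least two parts of $P_\alpha$. I would now apply a fragmentation step: pick a coordinate $j^*$ lying in such a crossing part and split each summand $T_i$ further, along the fibres of $x_{j^*}$, so that the $j^*$-coordinate is separated from its siblings within the crossing part of $J_{\sigma(i)}$. A Cauchy-Schwarz-style doubling, pairing two copies of the decomposition and contracting in the $j^*$-coordinate, then yields a decomposition of an order-$(d-1)$ diagonal tensor $\Delta_{P'}$ on $[n]^{d-1}$ whose $R'$-rank is bounded by a polynomial in $k$, with the residual pair $(P',R')$ still satisfying the non-refinement hypothesis. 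Iterating this reduction from $d$ down to $2$ and compounding the overhead produces the stated exponent $c(d)=3^{-2^{d+1}}$, in which the base $3$ arises from a three-way case analysis inside each fragmentation, and the power $2^{d+1}$ counts the substeps needed to resolve all crossings between $P$ and the partitions in $R$.

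The main obstacle is preserving the non-refinement hypothesis under the reduction: after contracting $j^*$, the residual partition $P/j^*$ may unintentionally become finer than $P_\alpha/j^*$ for some $P_\alpha\in R$, and the induction would then collapse. To handle this, the fragmentation must be coordinated across all partitions in $R$ simultaneously, and the inductive hypothesis will probably have to be strengthened to permit an auxiliary family of forbidden partitions on $[d-1]$ that absorbs these degenerate cases. A secondary difficulty is bookkeeping the exact multiplicative blow-up of the fragmentation so that each substep makes strict combinatorial progress on some auxiliary complexity measure of $(P,R)$ while costing only a constant factor in the exponent of $k$; the doubly-exponentially small $c(d)$ suggests that the measure can decrease only slowly, roughly by a single unit at each of $2^{d+1}$ substeps, which is what I would aim to codify in the inductive statement.
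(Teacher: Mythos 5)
Your broad outline (induction, a fragmentation step, an exponent that compounds doubly-exponentially) is in the same spirit as the paper's proof, but the mechanism you describe is genuinely different from the one the paper uses, and as written your plan has gaps that I do not see how to close.

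The paper does not induct on $d$ alone, nor does it contract a single coordinate $j^*$ to descend from an order-$d$ tensor to an order-$(d-1)$ one. Instead it inducts on the pair $(d, J_{\max}(R))$, where $J_{\max}(R)$ is the largest part appearing in any partition of $R$ under a fixed total order on subsets of $[d]$. The heart of the argument, Case~2, applies Proposition~\ref{Fragmentation} with $R_1 = R$, $R_2 = \{P\}$, $J = J_{\max}(R)$, and this step \emph{keeps $d$ fixed} and \emph{keeps $P$ fixed}: it replaces $R$ by $R'(J)$, in which the part $J$ has been split in all possible ways into two nonempty parts, at the polynomial cost $k \mapsto (k+1)^3$ on the rank (hence the base~$3$ — it comes from a cubic bound in the fragmentation lemma, not from a three-way case analysis). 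Since $J_{\max}(R'(J)) < J_{\max}(R)$ and there are at most $2^d$ candidates for $J_{\max}$, this can iterate at most $2^d$ times before $R$ collapses to $R_{\mathrm{tr}}$, giving the $3^{-2^{d+1}}$ exponent. Only in Case~1 (when some part $J \in P$ is not strictly contained in any part of any partition of $R$) does the dimension drop, and there the paper does not contract a coordinate by ``Cauchy--Schwarz doubling''; it takes a linear functional $u$ on the $J^c$-side, built from duality together with the inductive bound on $\Delta_{P\setminus\{J\}}$, to reduce to the partition rank of $\Delta_J$ being $n$.

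Because the paper's fragmentation does not change $P$ or $d$, the ``non-refinement is preserved'' problem that you flag as your main obstacle never arises: it is immediate that $P$ remains non-finer than every partition of $R'(J)$, because splitting parts of the partitions in $R$ can only make them finer. Your proposed repair — strengthening the inductive hypothesis with an auxiliary family of forbidden partitions — is not needed in the paper and is not filled in by you; without it, the step where you contract $j^*$ can indeed kill the non-refinement hypothesis and the induction would stall. Likewise, the ``Cauchy--Schwarz-style doubling'' of two copies of the decomposition is not made concrete: the paper's Proposition~\ref{Fragmentation} instead uses a family of dual functions with small common support, applied to a \emph{single} pair of decompositions, to project onto the $B_i$'s and produce a decomposition into more fragmented pieces. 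So while your high-level picture (fragment, pay a polynomial cost, iterate $\sim 2^d$ times) matches the shape of the true proof, the two concrete engines you propose — coordinate contraction and Cauchy--Schwarz doubling — are not the paper's, and the two difficulties you yourself identify (preserving non-refinement, bounding the blow-up) are exactly the places where your plan as stated does not go through.
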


The partial order that we defined on partitions can be used to define a relation on non-empty families of partitions. Whenever $d \ge 1$ is an integer and $R_1, R_2$ are two non-empty families of partitions of $[d]$, we say that $R_2$ is \emph{finer than} $R_1$ if for every partition $P_2 \in R_2$ there exists a partition $P_1 \in R_1$ satisfying $P_2 \prec P_1$. We will again write this property by $R_2 \prec R_1$. 

We note that if $R_2$ is contained in $R_1$, then $R_2 \prec R_1$, so whenever $R_2 \prec R_1$ we necessarily have both $R_1 \prec R_1 \cup R_2$ and $R_1 \cup R_2 \prec R_1$, so the relation $\prec$ is not antisymmetric, and hence not a partial order on families of partitions. It nonetheless satisfies reflexivity and transitivity, which is why we choose to keep the notation $\prec$. We also note that for $\prec$, the family $R_{\tr}$ is smaller than every other $R$ for $\prec$, the family $\{\{[d]\}\}$ is greater than every other $R$ for $\prec$, and the family $R_{\pr}$ is greater than every other $R$ not containing the trivial partition $[d]$.

With this definition, yet another observation of Naslund can be formulated as follows.

\begin{proposition}[{\cite{Naslund}, Proposition 9}] \label{Refinements do not decrease rank} Let $R_1, R_2$ be two non-empty families of partitions of $[d]$. If $R_2 \prec R_1$, then $\Ronerk T \le \Rtwork T$ for every order-$d$ tensor $T$. \end{proposition}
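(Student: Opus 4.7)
The plan is to reduce the inequality to the rank-$1$ case, by showing that every tensor of $R_2$-rank at most $1$ automatically has $R_1$-rank at most $1$; once this is established, a decomposition of $T$ as a sum of $k$ tensors of $R_2$-rank at most $1$ yields a decomposition of $T$ as a sum of $k$ tensors of $R_1$-rank at most $1$, so $\Ronerk T \le \Rtwork T = k$.

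For the rank-$1$ case, I would start from the assumption that $T$ has $R_2$-rank at most $1$, so there is a partition $P_2 \in R_2$ and functions $a_J$ for $J \in P_2$ satisfying $T(x_1, \dots, x_d) = \prod_{J \in P_2} a_J(x(J))$. Using $R_2 \prec R_1$, I pick $P_1 \in R_1$ with $P_2 \prec P_1$, and then for each $J \in P_2$ pick some $s(J) \in P_1$ with $J \subset s(J)$. This is exactly the same trick as in the short argument the paper already gives in the paragraph following Question~\ref{Naslund question in the general case} for a tensor $\Delta_P$ when $P$ is finer than some partition of $R$: one regroups the factors $a_J$ of the product according to their image under $s$.

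Concretely, for each $J' \in P_1$, I would set $b_{J'}(x(J')) = \prod_{J \in s^{-1}(J')} a_J(x(J))$, with the convention that an empty product is the constant $1$. Since $J \subset J'$ for every $J \in s^{-1}(J')$, each factor $a_J(x(J))$ depends only on coordinates of $x$ in $J'$, so $b_{J'}$ is a well-defined function on $\prod_{j \in J'} [n_j]$. Rearranging factors then gives $T(x_1, \dots, x_d) = \prod_{J' \in P_1} b_{J'}(x(J'))$, witnessing that $T$ has $R_1$-rank at most $1$ via the partition $P_1 \in R_1$.

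There is no real obstacle: the argument is essentially bookkeeping, and the only subtlety is handling the case in which $s^{-1}(J')$ is empty for some $J' \in P_1$, which is the reason for the convention that the empty product is $1$. Combined with the additive reduction in the first paragraph, this completes the proof.
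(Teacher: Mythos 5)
Your proposal is correct and matches the paper's approach exactly: the paper reduces to the rank-$1$ case in one sentence and leaves the verification implicit (referring back to the regrouping trick already displayed after Question~\ref{Naslund question in the general case}), which is precisely the argument you spell out with the map $s: P_2 \to P_1$ and the regrouped factors $b_{J'}$.
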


To prove that, it suffices to check that every order-$d$ tensor with $R_2$-rank at most $1$ must have $R_1$-rank at most $1$. Theorem \ref{Diagonal bound theorem} has the following consequence which may be viewed qualitatively as a converse to Proposition \ref{Refinements do not decrease rank}.

\begin{corollary} \label{Diagonal bound corollary} Let $d \ge 2$ be an integer, and let $R_1, R_2$ be two non-empty families of partitions of $[d]$. If $R_2 \nprec R_1$, then there exists an order-$d$ tensor $T$ satisfying $\Rtwork T = 1$ but $\Ronerk T \ge n^{c(d)}$. \end{corollary}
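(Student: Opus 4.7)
The plan is to derive the corollary directly from Theorem \ref{Diagonal bound theorem} by exhibiting a single explicit witness. First I unpack the hypothesis $R_2 \nprec R_1$: by the definition of $\prec$ on families of partitions, this is equivalent to saying that there exists a partition $P \in R_2$ such that $P \not\prec P_1$ for every $P_1 \in R_1$, i.e.\ $P$ is not finer than any of the partitions of $R_1$. Fix such a $P$ once and for all, and set $T = \Delta_P$.

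It remains to verify both claimed bounds on this specific $T$. For the upper bound, since $P \in R_2$, the defining factorisation
\[ \Delta_P(x_1, \dots, x_d) = \prod_{J \in P} \Delta_J(x(J)) \]
exhibits $\Delta_P$ as an $R_2$-rank-$1$ tensor (using the partition $P$ itself in the definition), and because $\Delta_P$ is nonzero (its value at any $(i,i,\dots,i)$ equals $1$), in fact $\Rtwork T = 1$. For the lower bound, since $P$ is not finer than any partition in $R_1$, Theorem \ref{Diagonal bound theorem} applies to $P$ and the family $R_1$ and yields $\Ronerk T = \Ronerk \Delta_P \ge n^{c(d)}$, as required. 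In this sense the corollary is a mechanical consequence of the main theorem: the only real step is translating the non-refinement condition on families into the non-refinement of a single partition that is needed as input to Theorem \ref{Diagonal bound theorem}, and the entire quantitative difficulty of the statement is already absorbed into that theorem.
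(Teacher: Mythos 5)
Your proof is correct and is essentially the paper's intended argument: the paper states the corollary as an immediate consequence of Theorem \ref{Diagonal bound theorem} without a separate proof, and the same witness $T = \Delta_P$ with $P \in R_2$ not finer than any member of $R_1$ appears explicitly in the proof of Theorem \ref{Main theorem}. No discrepancy.
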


Besides the power bound in Theorem \ref{Diagonal bound theorem}, the present paper will be more broadly aimed at is the study of the relationships between the various notions of $R$-ranks of tensors. This paper may be viewed as being focused on the following question, which we will answer completely.

\begin{question} \label{Main question} Let $d \ge 2$, $h \ge 1$ be integers, and $R_1, \dots, R_h$ are non-empty families of partitions of $[d]$. For which non-empty sets $R$ of partitions of $[d]$ is it true that if $T$ is an order-$d$ tensor with bounded $R_i$-rank for each $i=1, \dots, h$, then $T$ must also have bounded $R$-rank ? \end{question}

We now give a definition in terms of which we will ultimately formulate our answer to Question \ref{Main question}.

\begin{definition} Let $d \ge 2$, $h \ge 1$ be integers. If $P_1, \dots, P_h$ are partitions of $[d]$ (or more generally of some subset of $[d]$), then we say that the \emph{least common refinement} of $P_1, \dots, P_h$ is the partition \[P_1 \wedge \dots \wedge P_h = \{J_1 \cap \dots \cap J_h: J_1 \in P_1, \dots, J_h \in P_h, J_1 \cap \dots \cap J_h \neq \emptyset\}.\] If $R_1, \dots, R_h$ are non-empty families of partitions of $[d]$ (or more generally of some subset of $[d]$), then we say that the \emph{least common refinement} of $R_1, \dots, R_h$ is the family of partitions \[R_1 \wedge \dots \wedge R_h = \{P_1 \wedge \dots \wedge P_h: P_1 \in R_1, \dots, P_h \in R_h \}.\] \end{definition}

We note the identities \begin{align*} P_h \wedge (P_1 \wedge \dots \wedge P_{h-1}) & = P_1 \wedge \dots \wedge P_h \\
R_h \wedge (R_1 \wedge \dots \wedge R_{h-1}) & = R_1 \wedge \dots \wedge R_h \end{align*} which will very shortly be useful to us in reducing situations involving several families of partitions to situations involving only two such families.

Theorem \ref{Diagonal bound theorem} together with the following theorem will suffice for us to answer Question \ref{Main question} in full.

\begin{theorem}\label{Boundedness for two ranks} Let $d \ge 2$, $k_1, k_2 \ge 1$ be integers, and let $R_1,R_2$ be non-empty families of partitions of $[d]$. There exists an integer $C_{d,R_1.R_2}(k_1, k_2)$ such that if $T$ is an order-$d$ tensor such that $\Ronerk T \le k_1$ and $\Rtwork T \le  k_2$, then \[\Rwedgerk T \le C_{d,R_1.R_2}(k_1, k_2).\] \end{theorem}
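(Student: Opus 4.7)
The plan is to reduce to the singleton case $R_1=\{P_1\}$, $R_2=\{P_2\}$ and to prove that case by induction on $d$. For the reduction, I would attempt to split the $R_1$- and $R_2$-decompositions of $T$ according to which partition in the corresponding family each summand uses, obtaining sub-tensors $S_P$ with $\{P\}$-rank at most $k_i$. The difficulty, which I foresee as the main obstacle, is that this grouping does not preserve the complementary rank bound on the pieces, so naive pigeonhole fails; the fragmentation technique advertised in the abstract is presumably deployed here to iteratively produce fragments that simultaneously inherit both rank bounds, at a cost absorbed into $C_{d,R_1,R_2}(k_1,k_2)$.

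For the singleton case, the argument is by induction on $d$. The base $d=1$ is trivial. If $P_1\prec P_2$ then $P_1\wedge P_2=P_1$ and the bound is $k_1$; similarly if $P_2\prec P_1$. Otherwise some cell $J_0\in P_2$ is split by $P_1$, so $1\le |J_0|\le d-1$ and the restriction $P_1|_{J_0}=\{J\cap J_0 : J\in P_1,\ J\cap J_0\ne\emptyset\}$ has at least two cells. The $\{P_2\}$-decomposition $T=\sum_{j=1}^{k_2}\prod_{J\in P_2}b_J^{(j)}(x(J))$ expresses $T$ as a sum of $k_2$ matrices of rank $1$ in the flattening with rows indexed by $x(J_0)$ and columns by $x([d]\setminus J_0)$, so this flattening has matrix rank $r\le k_2$. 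Selecting $r$ rows $z_1,\dots,z_r\in[n]^{J_0}$ and $r$ columns $y_1,\dots,y_r\in[n]^{[d]\setminus J_0}$ whose submatrix is invertible, a CUR-type identity yields the expansion
\[ T(x) = \sum_{p,q=1}^{r} c_{p,q}\, T(x(J_0),y_p)\, T(z_q,x([d]\setminus J_0)). \]

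Each slice $T(\cdot,y_p)$ is a tensor of order $|J_0|<d$; restricting the $\{P_1\}$-decomposition of $T$ by fixing $x([d]\setminus J_0)=y_p$ gives it $P_1|_{J_0}$-rank at most $k_1$, and since $J_0$ is a cell of $P_2$ we have $(P_1\wedge P_2)|_{J_0}=P_1|_{J_0}$. Similarly, $T(z_q,\cdot)$ has order $d-|J_0|<d$, $P_1|_{[d]\setminus J_0}$-rank at most $k_1$, and $(P_2\setminus\{J_0\})$-rank at most $k_2$; the inductive hypothesis, applied to it with the partitions $P_1|_{[d]\setminus J_0}$ and $P_2\setminus\{J_0\}$ of the set $[d]\setminus J_0$, bounds its $(P_1\wedge P_2)|_{[d]\setminus J_0}$-rank by some constant $C'$. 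Multiplying the two resulting decompositions cell by cell, and summing over the $r^2\le k_2^2$ pairs $(p,q)$, exhibits $T$ as a sum of at most $k_1 k_2^2 C'$ tensors of $(P_1\wedge P_2)$-rank $1$, closing the induction.
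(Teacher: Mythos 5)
Your singleton-case argument ($R_1 = \{P_1\}$, $R_2 = \{P_2\}$) is correct, and it takes a genuinely different and more elementary route than the paper's corresponding proposition. The paper handles that case by choosing, for each cell $J_1 \in P_1$, dual functions to the cofactor products $\prod_{J \in P_1 \setminus \{J_1\}} a_{i,J}$ and substituting back into the $\{P_1\}$-decomposition, obtaining the bound $(k_1 k_2)^{|P_1|}$. Your CUR-type slicing along a cell $J_0 \in P_2$ that $P_1$ splits, followed by an induction on the ambient order, is cleaner and gives a comparable bound; the supporting claims (the $J_0$-flattening has matrix rank $\le k_2$; $(P_1 \wedge P_2)|_{J_0} = P_1|_{J_0}$ because $J_0 \in P_2$; $(P_1 \wedge P_2)|_{[d]\setminus J_0} = (P_1|_{[d]\setminus J_0}) \wedge (P_2 \setminus \{J_0\})$; and multiplying the two restricted decompositions indeed gives a $(P_1\wedge P_2)$-decomposition) all check out.

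The genuine gap is the reduction from arbitrary non-empty families $R_1, R_2$ to singletons, which you describe as a ``plan'' and then flag as the main obstacle without resolving it. Splitting an $R_1$-decomposition of $T$ into pieces $S_P$ ($P \in R_1$) with $\{P\}$-rank $\le k_1$ gives no control whatsoever over the $R_2$-ranks of the individual $S_P$, so you cannot feed them into the singleton case; appealing abstractly to ``the fragmentation technique'' does not fill this in. In fact the paper never reduces to singletons. It proves Theorem~\ref{Boundedness for two ranks} directly for arbitrary families by an outer induction on the pair $(d, J_{\max}(R_1 \cup R_2))$ driven by Proposition~\ref{Fragmentation}, and the technically delicate point your outline does not touch is that the functions $A_i^1, A_i^2$ appearing in the two $J$-decompositions of $T$ need not be jointly linearly independent, so Proposition~\ref{Fragmentation} cannot be applied as is. The paper runs an inner induction: whenever a linear dependence arises, the inductive hypothesis is invoked on $J^c$ with the families $R_{1\mathrm{comp}}(J)$ and $R_{2\mathrm{comp}}(J)$ to re-express the dependent tensor inside a growing pool of $R_{1\mathrm{comp}}(J) \wedge R_{2\mathrm{comp}}(J)$-rank-$1$ factors, which is then peeled off from $T$; this repeats at most $r_1 + r_2$ times until joint independence holds. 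Without this mechanism, or a substitute for it, your proposal establishes a correct and pleasant proposition but not the theorem as stated.
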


Corollary \ref{Diagonal bound corollary} and Theorem \ref{Boundedness for two ranks} together establish the following dichotomy and answer to Question \ref{Main question}.

\begin{theorem}\label{Main theorem} Let $d \ge 2$, $h \ge 1$ be integers, let $R_1, \dots, R_h$ be non-empty families of partitions of $[d]$, and let $R_{[h]}$ be the family of partitions $R_1 \wedge \dots \wedge R_h$. For every non-empty set $R$ of partitions of $[d]$ the following holds.

\begin{enumerate}

\item If $R_{[h]} \nprec R$, then there exists an order-$d$ tensor $T:[n]^d \to \F$ satisfying $\Rirk T = 1$ for each $i \in [h]$ but $\Rrk T \ge n^{c(d)}$.

\item If $R_{[h]} \prec R$, then whenever $k_1, \dots, k_h \ge 1$ are integers there exists some integer $C_{d,(R_1,\dots, R_h)}(k_1,\dots,k_h)$ such that if $T$ is an order-$d$ tensor satisfying $\Rirk T \le k_i$ for each $i \in [h]$, then \[\Rrk T \le C_{d,(R_1,\dots, R_h)}(k_1,\dots,k_h).\]

\end{enumerate}

\end{theorem}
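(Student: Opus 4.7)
The plan is to derive both parts of Theorem \ref{Main theorem} directly from the two preliminary results: Part 1 from Theorem \ref{Diagonal bound theorem} and Part 2 from Theorem \ref{Boundedness for two ranks}, using in each case the associativity identity $R_1 \wedge \dots \wedge R_h = R_h \wedge (R_1 \wedge \dots \wedge R_{h-1})$ recalled in the excerpt, together with Proposition \ref{Refinements do not decrease rank}.

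For Part 1, assume $R_{[h]} \nprec R$ and pick a partition $P \in R_{[h]}$ which is not finer than any partition in $R$. By definition of the least common refinement, $P = P_1 \wedge \dots \wedge P_h$ for some choice of $P_i \in R_i$, and in particular $P \prec P_i$ for every $i \in [h]$. The short computation recalled in the discussion immediately following Question \ref{Naslund question in the general case} then shows that the tensor $\Delta_P$ has $R_i$-rank equal to $1$ for each $i$. Since $P$ is not finer than any partition in $R$, Theorem \ref{Diagonal bound theorem} applied with this $P$ and the family $R$ yields $\Rrk \Delta_P \ge n^{c(d)}$. Taking $T = \Delta_P$ settles Part 1.

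For Part 2, I would prove by induction on $h$ the intermediate statement: whenever $\Rirk T \le k_i$ for every $i \in [h]$, the $R_{[h]}$-rank of $T$ is bounded by some quantity depending only on $d$, $(R_1, \dots, R_h)$ and $(k_1, \dots, k_h)$. The base case $h = 1$ is immediate since $R_{[1]} = R_1$. For the inductive step, set $R' = R_1 \wedge \dots \wedge R_{h-1}$; the induction hypothesis applied to the first $h-1$ families produces a bound on the $R'$-rank of $T$, and Theorem \ref{Boundedness for two ranks} applied to the pair $R'$ and $R_h$ then produces a bound on the $(R' \wedge R_h)$-rank of $T$. Associativity of the least common refinement identifies $R' \wedge R_h$ with $R_{[h]}$, completing the induction. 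Finally, the hypothesis $R_{[h]} \prec R$ combined with Proposition \ref{Refinements do not decrease rank} turns this bound on $R_{[h]}$-rank into the desired bound on $\Rrk T$.

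The main difficulties are already absorbed into the preliminary results rather than this synthesis step: Theorem \ref{Diagonal bound theorem} provides the power-type lower bound for the $R$-rank of a diagonal-type tensor (which in turn relies on the results about products of identity tensors announced in the abstract), while Theorem \ref{Boundedness for two ranks} provides the preservation of bounded rank under the wedge of two families through the fragmentation technique. Once these are in hand, the extension from two families to arbitrary $h$ is pure combinatorial bookkeeping around associativity of $\wedge$, and matching up the cases $R_{[h]} \prec R$ and $R_{[h]} \nprec R$ leaves no intermediate regime to handle.
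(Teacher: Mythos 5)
Your proposal is correct and matches the paper's own proof essentially verbatim: Part 1 follows by taking $T = \Delta_P$ for a witness $P \in R_{[h]}$ and invoking Theorem \ref{Diagonal bound theorem}, and Part 2 follows by induction on $h$ using Theorem \ref{Boundedness for two ranks} and Proposition \ref{Refinements do not decrease rank}. The only difference is that you spell out why $\Delta_P$ has $R_i$-rank $1$ (via $P \prec P_i$ and the remark after Question \ref{Naslund question in the general case}), a step the paper leaves implicit.
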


\begin{proof}

If $R_{[h]} \nprec R$ then there exists some partition $P \in R_{[h]}$ which is not finer than any partition of $R$. Taking $T = \Delta_P$ and applying Theorem \ref{Diagonal bound theorem} to $R$ and $P$ then provides the conclusion of Item 1. 

If $R_{[h]} \prec R$ then by Proposition \ref{Refinements do not decrease rank}, for every order-$d$ tensor $T$ we have $\Rrk T \le \Rhrk T$, so it suffices to show Item 2 in the case $R = R_{[h]}$. This in turn is established by Theorem \ref{Boundedness for two ranks} and an induction on $h$. \qedhere

\end{proof}

Theorem \ref{Main theorem} in particular provides a sufficient condition for a tensor to have bounded tensor rank.

\begin{corollary} Let $d \ge 2, h \ge 1$ be integers, let $R_1,\dots,R_h$ be non-empty families of partitions of $[d]$ such that $P_1 \wedge \dots \wedge P_h$ is the discrete partition for all $P_1 \in R_1, \dots, P_h \in R_h$. Then, for any integers $k_1, \dots, k_h \ge 1$, if $T$ is an order-$d$ tensor with $\Rirk T \le k_i$ for each $i \in [h]$, then \[\tr T \le C_{d,(R_1,\dots, R_h)}(k_1,\dots,k_h).\] \end{corollary}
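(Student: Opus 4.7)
The plan is to obtain this corollary as an immediate specialisation of item 2 of Theorem \ref{Main theorem}, taking $R = R_{\tr}$, the singleton family whose unique element is the discrete partition $\{\{1\}, \dots, \{d\}\}$ of $[d]$.

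The one substantive step is to recognise that the hypothesis of the corollary is precisely the statement $R_{[h]} = R_{\tr}$. By the definition of the least common refinement of families,
\[ R_1 \wedge \dots \wedge R_h = \{P_1 \wedge \dots \wedge P_h : P_1 \in R_1, \dots, P_h \in R_h\}, \]
so the assumption that $P_1 \wedge \dots \wedge P_h$ is the discrete partition for every choice of $(P_1, \dots, P_h) \in R_1 \times \dots \times R_h$ forces every element of $R_{[h]}$ to equal $\{\{1\}, \dots, \{d\}\}$. In particular $R_{[h]} \prec R_{\tr}$ holds trivially, by reflexivity of $\prec$.

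Item 2 of Theorem \ref{Main theorem}, applied with this choice of $R$, then yields an integer $C_{d,(R_1,\dots,R_h)}(k_1,\dots,k_h)$ with the property that any order-$d$ tensor $T$ with $\Rirk T \le k_i$ for every $i \in [h]$ satisfies $\Rrk T \le C_{d,(R_1,\dots,R_h)}(k_1,\dots,k_h)$, where $\Rrk$ now denotes the $R$-rank with respect to $R = R_{\tr}$. By inspection of Definition \ref{Rrk definition}, this notion of rank is exactly the tensor rank $\tr$, which finishes the argument. There is no real obstacle here: the corollary is a pure repackaging of Theorem \ref{Main theorem} in the case where the refinement operation collapses all products of chosen partitions to the discrete partition of $[d]$, so that the hypothesis of item 2 is automatically met with $R$ the family defining the tensor rank.
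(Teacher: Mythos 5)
Your argument is correct and is precisely the intended derivation: the hypothesis forces $R_{[h]} = R_{\tr}$, so $R_{[h]} \prec R_{\tr}$ holds by reflexivity, and item~2 of Theorem~\ref{Main theorem} with $R = R_{\tr}$ gives the bound on the $R_{\tr}$-rank, which is the tensor rank. The paper states this corollary without proof as an immediate consequence of Theorem~\ref{Main theorem}, and your write-up supplies exactly that specialisation.
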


Section \ref{Section: The fragmentation technique} will be devoted to a fragmentation technique which will then be our central tool in the proofs of both Theorem \ref{Diagonal bound theorem} and Theorem \ref{Boundedness for two ranks} in Section \ref{Section: Lower bounds on ranks of products of diagonal tensors} and Section \ref{Boundedness of ranks generated by a pair of ranks} respectively.

\section{The fragmentation technique} \label{Section: The fragmentation technique}

This section discusses a fragmentation result which will allow us to fragment notions of rank. Before doing that, let us introduce some notation on tensors which we will use throughout the present paper.

Let $d \ge 2$ be an integer. If $T$ is an order-$d$ tensor, $J$ is a subset of $[d]$ and $y$ is an element of $\prod_{j \in J^c} [n_j]$, then we write $T_y: \prod_{j \in J} [n_j] \to \F$ for the order-$|J|$ tensor defined by \[T_y(x(J)) = T(z)\] where $z$ is the element of $\prod_{j=1}^d [n_j]$ defined by $z_j = x_j$ for every $j \in J$ and $z_j = y_j$ for every $j \in J^c$.

If $\{J_1, J_2\}$ is a bipartition of $[d]$, $T$ is an order-$d$ tensor and $y \in \prod_{j \in J_1} [n_j]$, $z \in \prod_{j \in J_2} [n_j]$ are elements, then we write $T(y,z)$ for the value $T(x)$ where $x$ is the element of $\prod_{j=1}^d [n_j]$ defined by $x_j = y_j$ for every $j \in J_1$ and $x_j = z_j$ for every $j \in J_2$.

If $J_1,J_2$ are subsets of $\lbrack d \rbrack$, $T_1: \prod_{j \in J_1} [n_j] \rightarrow \mathbb{F}$, $T_2: \prod_{j \in J_2} [n_j] \rightarrow \mathbb{F}$ are tensors, then we write $T_1.T_2: \prod_{j \in J_1 \Delta J_2} [n_j] \rightarrow \mathbb{F}$ for the tensor (or rather element of $\F$ if $J_1 = J_2$) defined by \[ (T_1.T_2)(y,z) = \sum_{x \in \prod_{j \in J_1 \cap J_2} [n_j]} T_1(y,x) T_2(z,x)  \] for each $y \in \prod_{j \in J_1 \setminus J_2} [n_j]$ and each $z \in \prod_{j \in J_2 \setminus J_1} [n_j]$.

If $r$ is a nonnegative integer and $A_1, \dots, A_r, A$ are linearly independent functions from some finite set $X$ (of the type $\prod_{j \in J} [n_j]$) to $\F$, then it follows from Gaussian elimination that there exists a function $u:X \to \F$ such that $u.A_i = 0$ for every $i \in [r]$ but $u.A = 1$, and that we can require the function $u$ to have support size at most $r+1$, that is, that $u^{-1}(\F \setminus \{0\})$ contains at most $r+1$ elements of $X$. Likewise, there exist functions $A_1^*, \dots, A_r^*: X \to \F$ with supports all contained in the same set of size $r$ such that $A_i^*.A_j = 1_{i=j}$ for all $i,j \in [r]$. We will refer to the family $(A_1^*, \dots, A_r^*)$ as a family of \emph{dual functions} to the family $(A_1, \dots, A_r)$.

If $R$ is a non-empty family of partitions of $[d]$, then we say that an \emph{R-rank decomposition} of an order-$d$ tensor $T$ is a decomposition of the type \begin{equation} T(x) = \sum_{P \in R} \sum_{i=1}^{r_P} \prod_{J \in P} a_{P,i,J}(x(J)). \label{R-rank decomposition} \end{equation}

We will refer to the integer $\sum_{P \in R} r_P$ as the \emph{length} of the decomposition. We will say that an \emph{R-rank decomposition} of $T$ has minimal length if its length has the smallest possible value, that is, the $R$-rank of $T$.

Our proofs will often single out a particular subset $J$ of $[d]$ as playing a particular role. To that purpose, given $R$ a non-empty family of partitions of $[d]$ and $J$ a subset of $[d]$ satisfying $1 \le |J| \le d-1$ we define the auxiliary families of partitions \begin{align*} R_+(J) & = \{Q \in R: J \in Q\} \\
R_-(J)  & = \{Q \in R: J \notin Q\} \\
R_{\mathrm{comp}}(J)  & = \{Q \setminus \{J\}: Q \in R_+(J)\} \\
R_{\mathrm{new}}(J)  & = \{Q \cup \{J_1,J_2\}\}: Q \in R_{\mathrm{comp}}(J), \{J_1, J_2\} \text{ a bipartition of } J\} \\
R'(J) & = R_-(J)  \cup R_{\mathrm{new}}(J). \end{align*}

Informally, the family of partitions $R’(J)$ consists of $R$ modified in that the part $J$ is further split in all possible ways into two non-empty parts whenever it appeared in a partition of $R$. (The index “comp" in $R_{\mathrm{comp}}(J)$ stands for “compatible".) This way of viewing the partitions of $R$ had already been applied in \cite{K. subtensors}, Section 11 to obtain a high-rank subtensor of bounded size from a high-rank tensor.

It will be convenient for us to use a total ordering $\le$ on non-empty subsets of $[d]$ by choosing it arbitrarily on the class of sets \[\{J \subset [d]: |J|=t\}\] for every $1 \le t \le d$ and then specifying that whenever $J_1,J_2$ are subsets of $[d]$ with $1 \le |J_1| < |J_2|$ we have $J_1 < J_2$. For every non-empty family $R$ of partitions of $[d]$ we let \[J_{\max}(R) = \max \{J \in P: P \in R\}\] where the maximum is taken with respect to the total ordering that we have just defined. 

We will use this ordering to prove properties inductively on families of partitions $R$ where $J_{\max}(R)$ is at most some subset $J$ of $[d]$. At the inductive step the fragmentation technique, encapsulated by Proposition \ref{Fragmentation}, will allow us to reduce a statement which we want to prove for some $R$ to a statement for the family $R'=R'(J_{\max}(R))$, which satisfies $J_{\max}(R') < J_{\max}(R)$. As the number of possibilities for $J_{\max}(R)$ is at most $2^d$, this step will have to be performed at most that many times, after which the resulting family of partitions that we have obtained may only contain the discrete partition $R_{\tr}$, in which case it is then not difficult to conclude.

If $J$ is a non-empty strict subset of $[d]$, then the decomposition \eqref{R-rank decomposition} can be written more concisely as \begin{equation} T(x) = \sum_{i=1}^r A_i(x(J^c)) B_i(x(J)) + \sum_{i=1}^{s} F_i(x), \label{R-rank decomposition with A_i, B_i, F_i} \end{equation} where $r,s$ are nonnegative integers, the functions $A_i: \prod_{j \in J^c} [n_j] \to \F$ have $R_{\mathrm{comp}}(J)$-rank at most $1$, the functions $B_i: \prod_{j \in J} [n_j] \to \F$ are arbitrary, and the functions $F_i:\prod_{j=1}^d [n_j] \to \F$ have $R_-(J)$-rank at most $1$. We may furthermore require the functions $A_1, \dots, A_r$ to be linearly independent, and when the decomposition \eqref{R-rank decomposition with A_i, B_i, F_i} has minimal length this will always be the case. Indeed if for instance, $A_r$ were linearly spanned by $A_1, \dots, A_{r-1}$, then we would be able to rewrite the first sum as \[\sum_{i=1}^{r-1} A_i(x(J^c)) B_i'(x(J))\] for some new functions $B_1', \dots, B_{r-1}'$.

\subsection{A few examples}

The basic goal of the technique which we will be describing in the remainder of this section is a way to use two decompositions of a tensor to deduce decompositions that are more fragmented than the original two. Let us begin with a few examples to illustrate the underlying idea.

\begin{example} \label{Example 1 fragmentation}

\emph{Assume that \begin{equation} \sum_{i=1}^r a_i(x) b_i(y,z) = \sum_{j=1}^s c_j(y) d_j(x,z) \label{equality between two order-3 decompositions} \end{equation} are two decompositions of the same order-$3$ tensor $T$, where the family $(a_1, \dots, a_r)$ is linearly independent. Let $(a_1^*, \dots, a_r^*)$ be a family of dual functions to the family $(a_1, \dots, a_r)$. For every $i \in [r]$, applying the function $a_i^*$ to \eqref{equality between two order-3 decompositions} we obtain \[b_i(y,z) = \sum_{j=1}^s (a_i^*.(c_j d_j))(y,z).\] We may factor \[(a_i^*.(c_j d_j))(y,z) = c_j(y) (a_i^*.d_j)(z)\] and hence write \[T(x,y,z) = \sum_{i=1}^r \sum_{j=1}^s a_i(x) c_j(y) (a_i^*.d_j)(z)\] which shows in particular that $T$ has tensor rank at most $rs$.}

\end{example}

\begin{example} \label{Example 2 fragmentation}

\emph{Let $d \ge 3$ be an integer and let $T$ be an order-$d$ tensor which admits a decomposition \begin{equation} T(x_1, \dots, x_d) = \sum_{i=1}^{k_j} a_{j,i}(x_j) b_{j,i}(x_1, \dots, x_{j-1}, x_{j+1}, \dots, x_d) \label{flattening decompositions} \end{equation} for every $j \in [d]$. In other words, we are assuming that the $j$-flattening rank of $T$ is at most $k_j$ for every $j \in [d]$. Also assume that for every $j \in [d]$ the family $(a_{j,1}, \dots, a_{j,k_j})$ is linearly independent and let $(a_{j,1}^*, \dots, a_{j,k_j}^*)$ be a family of dual functions to this family. Applying the functions $a_{1,i}^*$ to the equality between the decompositions \eqref{flattening decompositions} with $j=1$ and $j=2$ we obtain \[T(x_1, \dots, x_d) = \sum_{i_1=1}^{k_1} \sum_{i_2=1}^{k_2} a_{1,i_1}(x_1) a_{2,i_2}(x_2) b_{(1,i_1), (2,i_2)}(x_3, \dots, x_d),\] where \[b_{(1,i_1), (2,i_2)} = a_{1,i_1}^*.b_{2,i_2} + a_{2,i_2}^*.b_{1,i_1}.\] Writing the equality between this decomposition and the decomposition \eqref{flattening decompositions} with $j=3$, and applying the functions $a_{1,i_1}^* \otimes a_{2,i_2}^*$ we obtain a decomposition of the type \[T(x_1, \dots, x_d) = \sum_{i_1=1}^{k_1} \sum_{i_2=1}^{k_2} \sum_{i_3=1}^{k_3} a_{1,i_1}(x_1) a_{2,i_2}(x_2) a_{3,i_3}(x_3) b_{(1,i_1), (2,i_2), (3,i_3)}(x_4, \dots, x_d).\] Iterating further (the next step starting with applying the functions $a_{1,i_1}^* \otimes a_{2,i_2}^* \otimes a_{3,i_3}^*$) we ultimately obtain \[T(x_1, \dots, x_d) = \sum_{i_1 \in [k_1], \dots, i_d \in [k_d]} \lambda_{i_1, \dots, i_d} a_{1,i_1}(x_1) a_{2,i_2}(x_2) \dots a_{d,i_d}(x_d),\] and we recover the known fact that $T$ has tensor rank at most $k_1 \dots k_{d-1}$. (To establish this last conclusion it suffices to run this argument except the last iteration. In turn, to be able to do that it suffices to assume that the $j$-flattening ranks are bounded above by $k_j$ for all $j \in [d-1]$ rather than for all $j \in [d]$.)}

\end{example}

\begin{example} \label{Example 3 fragmentation}

\emph{Assume that \[ \sum_{i=1}^r a_i(x,y) b_i(z,w) = \sum_{j=1}^s c_j(y,z) d_j(x,w) \] are two decompositions of the same order-$4$ tensor $T$, where the family $(a_1, \dots, a_r)$ is linearly independent. We then let $(a_1^*, \dots, a_r^*)$ be a family of dual functions to this family, and furthermore require that the functions $a_1^*, \dots, a_r^*$ are all supported inside some subset $U \subset [n_1] \times [n_2]$ with size $r$. Then as in Example \ref{Example 1 fragmentation}, for every $i \in [r]$ we can write \[b_i(z,w) = \sum_{j=1}^s (a_i^*.(c_jd_j))(z,w).\] However, the expression $(a_i^*.(c_jd_j))(z,w)$ does not immediately factor as it did in Example \ref{Example 1 fragmentation}: indeed we have \begin{equation} (a_i^*.(c_jd_j))(z,w) = \sum_{(x,y) \in [n_1] \times [n_2]} a_i^*(x,y) c_j(y,z) d_j(x,w), \label{expression that does not factor} \end{equation} and the set $\{1,2\}$ of coordinates on which $a_i^*$ depends is not contained in any the respective corresponding sets $\{2,3\}$ and $\{1,4\}$ of $c_j$ and $d_j$. Instead we use in an important way the fact that $a_i^*$ is supported inside $U$. Indeed the right-hand side of \eqref{expression that does not factor} has a sum that can be taken merely over $U$, and is hence equal to the linear combination \[ \sum_{(u_1, u_2) \in U} a_i^*(u_1, u_2) c_j(u_2,z) d_j(u_1,w)\] of $r$ functions with rank at most $1$. We can then write \[b_i(z,w) = \sum_{j=1}^s \sum_{(u_1,u_2) \in U} a_i^*(u_1,u_2) c_j(u_2,z) d_j(u_1,w).\] Likewise we can choose a dual family $(b_1^*, \dots, b_r^*)$ to $(b_1, \dots, b_r)$ supported inside some subset $V$ of $[n_3] \times [n_4]$, and then write \[a_i(x,y) = \sum_{j=1}^s \sum_{(u_3,u_4) \in V} b_i^*(u_3,u_4) c_j(y,u_3) d_j(x,u_4).\] We hence obtain \begin{multline} T(x,y,z,w) = \sum_{(u_1,u_2) \in U} \sum_{(u_3,u_4) \in V} \left(\sum_{i=1}^r a_i^*(u_1,u_2) b_i^*(u_3,u_4) \right) \\ \left( \sum_{j'=1}^s \sum_{j''=1}^s d_{j'}(x,u_4) c_{j'}(y,u_3) c_{j''}(u_2,z) d_{j''}(u_1,w) \right), \nonumber \end{multline} which shows in particular that the tensor rank of $T$ is at most $|U||V|s^2 \le r^2s^2$.} \end{example}

\begin{example} \label{Example 4 fragmentation}

\emph{Assume that \[\sum_{i=1}^r a_i(x,y) b_i(z,w) = \sum_{j=1}^s c_j(x) d_j(y,z,w)\] are two decompositions of the same order-$4$ tensor $T$. If we assume that the family $(a_1, \dots, a_r)$ is independent and consider a dual family $(a_1^*, \dots, a_r^*)$, applying some $a_i^*$ to both sides of the equality yields  \[b_i(z,w) = \sum_{j=1}^s \sum_{(x,y) \in [n_1] \times [n_2]} c_j(x) d_j(y,z,w),\] which does not allow us to write $b_i$ as a sum of a bounded number of rank-$1$ functions in general, even assuming that $a_i^*$ has support size at most $r$, because the set of variables on which the functions $d_j$ depend is not contained in the set of variables on which the functions $b_i$ depend. There exist tensors for which this is not the case, as can be seen for instance by taking \[T(x,y,z,w) = b(z,w)\] with $b$ of rank $n$.} \end{example}

Examples such as Example \ref{Example 4 fragmentation} are why whenever we will use decompositions such as \eqref{R-rank decomposition with A_i, B_i, F_i} with a view of fragmenting the functions therein, we will always choose the set $J$ to be maximal (for inclusion) among the sets arising from any partition of $R$. (It will also convenient to take $J$ with maximal size rather than merely maximal for inclusion.)

\subsection{The main fragmentation statement}

We now prove our fragmentation statement in the general case. Somewhat similar ideas had been used in Proposition 11.4 from \cite{K. subtensors}, where the $R_2$-rank decomposition was replaced by an assumption that all suitably chosen slices of the tensor have bounded partition rank. The broader idea of splitting tensors into lower-order tensors was also used throughout \cite{K. decompositions} in the much more limited context of slice rank decompositions.

\begin{proposition} \label{Fragmentation} Let $d \ge 2$ be an integer and let $R_1, R_2$ be non-empty families of partitions of $\lbrack d \rbrack$ such that $R_1 \cup R_2 \neq R_{\tr}$. Let $J = J_{\max}(R_1 \cup R_2)$ and assume that $J \neq [d]$. Let $r_1,s_1,r_2,s_2$ be nonnegative integers. Assume that we have an equality \begin{equation} \sum_{i=1}^{r_1} A_i^1(x(J^c)) B_i^1(x(J)) + \sum_{i=1}^{s_1} F_i^1(x) = \sum_{i=1}^{r_2} A_i^2(x(J^c)) B_i^2(x(J)) + \sum_{i=1}^{s_2} F_i^2(x) \label{equality between two R-rank decompositions}\end{equation} between a $R_1$-rank and a $R_2$-rank decomposition, where the functions \[A_1^1, \dots, A_{r_1}^1, A_1^2, \dots, A_{r_2}^2\] are all linearly independent, the functions $A_i^1, A_i^2$ respectively have $R_{1\mathrm{comp}}(J)$-rank at most $1$ and $R_{2\mathrm{comp}}(J)$-rank at most $1$, the functions $B_i^1, B_i^2$ are arbitrary, and the functions $F_i^1, F_i^2$ respectively have $R_{1-}(J)$-rank at most $1$ and $R_{2-}(J)$-rank at most $1$. Then there exists a subset $U \subset \prod_{j \in J^c} [n_j]$ with size $r_1+r_2$ such that the families of tensors \begin{align*}\cf_{11} & = (A_i^1(F_{i'}^1)_u: i \in [r_1], i' \in [s_1], u \in U)\\
\cf_{12} & = (A_i^1(F_{i'}^2)_u: i \in [r_1], i' \in [s_2], u \in U)\\
\cf_1 & = (F_{i}: i \in [s_1])\end{align*} all consist in tensors that have $R_1(J)'$-rank at most $1$ and are such that either side of \eqref{equality between two R-rank decompositions} belongs to the linear span of the union \[\cf_{11} \cup \cf_{12} \cup \cf_1.\] In particular if $r_1 + s_1 \le k_1$ and $r_2 + s_2 \le k_2$ for some nonnegative integers $k_1, k_2$ then the $R_1'(J)$-rank of $T$ is at most \[k_1 ((k_1+k_2)^2 + 1).\]

\begin{proof} We define a family \[(A_1^{1*}, \dots, A_{r_1}^{1*},A_1^{2*}, \dots, A_{r_2}^{2*})\] of dual functions to the family \[(A_1^1, \dots, A_{r_1}^1, A_2^1, \dots, A_{r_2}^2).\] We can require the dual family to be supported inside some subset $U \subset \prod_{j \in J^c} [n_j]$ with size $r_1+r_2$. For each $i \in \lbrack r_1 \rbrack$, applying $A_i^{1*}$ to \eqref{equality between two R-rank decompositions} we obtain  \begin{align*} B_i^1(x(J)) & = \sum_{i'=1}^{s_2} (A_i^{1*}.F_{i'}^2)(x(J)) - \sum_{i'=1}^{s_1} (A_i^{1*}.F_{i'}^1)(x(J)) \\
& = \sum_{i'=1}^{s_2} \sum_{u \in U} A_i^{1*}(u) F_{i'}^2(u, x(J)) - \sum_{i'=1}^{s_1} \sum_{u \in U} A_i^{1*}(u) F_{i'}^1(u, x(J)). \end{align*} Let $i' \in [s_1]$ and $u \in U$ be fixed. Because $J$ is maximal (for the ordering $\le$, so in particular for inclusion) among all parts in all partitions of $R_1 \cup R_2$, and the function $F_{i'}^1$ has $R_{1-}(J)$-rank equal to $1$, it can be written as a product \[ \prod_{J' \in P} a_{J'}(x(J')) \] for some partition $P$ of $R_1$ such that no $J' \in P$ contains $J$. So $J$ has non-empty intersection with at least two distinct parts $J_1,J_2$ of $P$. Substituting $u_j$ for $x_j$ for every $j \in J$ shows that the slice $(F_{i'}^1)_u$ hence has partition rank at most $1$. Likewise for the slices $(F_{i'}^2)_u$. Because the functions $A_i^1$ have $R_{1\mathrm{comp}}(J)$-rank at most $1$, we conclude that the products \[A_i^1 (F_{i'}^1)_u, A_i^1 (F_{i'}^2)_u\] all have $R_{1{\mathrm{new}}}(J)$-rank at most $1$ (and hence $R_1'(J)$-rank at most $1$). The functions $F_{i'}^1$ all have $R_{1-}(J)$-rank at most $1$, so also have $R_1'(J)$-rank at most $1$. So the $R_1'(J)$-rank of $T$ is at most \[r_1s_1(r_1+r_2) + r_1s_2(r_1+r_2) + s_1\] and the desired bound follows. \end{proof}

\end{proposition}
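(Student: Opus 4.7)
The plan is to extract the functions $B_i^1$ from the left-hand side of \eqref{equality between two R-rank decompositions} via duality, re-express them through the $R_2$-side, and then observe that the maximality of $J$ forces the resulting building blocks to factor through a bipartition of $J$, which is precisely what $R_1'(J)$ permits.

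First I would choose dual functions $A_1^{1*},\dots,A_{r_1}^{1*},A_1^{2*},\dots,A_{r_2}^{2*}$ to the combined linearly independent family. By the Gaussian elimination fact recalled at the start of the section, these can be taken to share a common support $U\subset\prod_{j\in J^c}[n_j]$ of size $r_1+r_2$. Applying $A_i^{1*}$ in the $J^c$-coordinates to \eqref{equality between two R-rank decompositions} collapses the $A$-portion on the left to $B_i^1$, yielding
\[B_i^1(x(J))=\sum_{i'=1}^{s_2}\sum_{u\in U}A_i^{1*}(u)F_{i'}^2(u,x(J))-\sum_{i'=1}^{s_1}\sum_{u\in U}A_i^{1*}(u)F_{i'}^1(u,x(J)).\]
Multiplying by $A_i^1(x(J^c))$, summing over $i\in[r_1]$, and adding back $\sum_{i'=1}^{s_1}F_{i'}^1$ recovers the tensor as a linear combination of the families $\cf_{11}$, $\cf_{12}$, $\cf_1$ displayed in the statement.

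The central structural step is to check that every product $A_i^1(x(J^c))(F_{i'}^k)_u(x(J))$ has $R_1'(J)$-rank at most $1$. Since $F_{i'}^k$ has $R_{k-}(J)$-rank $1$, it equals $\prod_{J'\in P}a_{J'}(x(J'))$ for some $P\in R_k$ with $J\notin P$. Maximality of $J$ in $R_1\cup R_2$ under the size-respecting order forces $J\not\subset J'$ for every $J'\in P$, for a strict containment would give $|J'|>|J|$ and hence $J'>J=J_{\max}$. Therefore $J$ must meet at least two parts of $P$, so substituting $u$ for $x(J^c)$ turns $F_{i'}^k$ into a product over a non-trivial partition of $J$; grouping the non-empty intersections $J'\cap J$ into any bipartition $\{J_1,J_2\}$ of $J$ exhibits $(F_{i'}^k)_u$ as a product over this bipartition. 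Combined with the $R_{1\mathrm{comp}}(J)$-rank-$1$ factorisation of $A_i^1$ on $J^c$, the product $A_i^1\cdot(F_{i'}^k)_u$ then factors over a partition belonging to $R_{1\mathrm{new}}(J)\subset R_1'(J)$. The summands $F_{i'}^1$ have $R_1'(J)$-rank $1$ trivially since $R_{1-}(J)\subset R_1'(J)$.

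Counting rank-$1$ pieces yields at most $r_1 s_1|U|+r_1 s_2|U|+s_1$ summands, and the bounds $r_1,s_1\le k_1$, $r_2,s_2\le k_2$, $|U|=r_1+r_2\le k_1+k_2$ give $k_1((k_1+k_2)^2+1)$. The main conceptual obstacle is the structural observation about maximality of $J$: without it, the slices $(F_{i'}^k)_u$ need not split non-trivially across $J$, and the phenomenon illustrated in Example \ref{Example 4 fragmentation} shows that in such situations the $R_1'(J)$-rank of the pieces cannot be controlled. This is also why it is essential that $J$ be maximal in $R_1\cup R_2$ rather than merely in $R_1$, so that the fragmentation argument applies symmetrically to the $F_{i'}^2$ slices.
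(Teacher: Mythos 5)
Your proposal is correct and follows essentially the same route as the paper: choose a dual family with common support $U$ of size $r_1+r_2$, apply $A_i^{1*}$ to extract $B_i^1$ as a combination of slices $(F_{i'}^k)_u$, invoke the maximality of $J$ (together with $J\notin P$ because the $F$-terms have $R_{1-}(J)$- or $R_{2-}(J)$-rank $1$) to see that each such slice factors over a non-trivial bipartition of $J$, and conclude the products lie in $R_{1\mathrm{new}}(J)\subset R_1'(J)$, with the same count $r_1s_1|U|+r_1s_2|U|+s_1$. The only cosmetic difference is that you make explicit the reassembly of $T$ as a linear combination of $\cf_{11}\cup\cf_{12}\cup\cf_1$, which the paper leaves implicit.
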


\section{Power lower bounds on ranks of products of identity tensors} \label{Section: Lower bounds on ranks of products of diagonal tensors}

In this section we prove Theorem \ref{Diagonal bound theorem}.

\subsection{Some more examples}

Before writing out the proof of Theorem \ref{Diagonal bound theorem} in the general case, let us mention a few situations where we may conclude in rather short order.

\begin{example} \label{Example 1} \emph{As discussed in the introduction, whenever $d \ge 2$ and $R$ is a non-empty family of partitions of $[d]$ not containing the trivial partition $\{[d]\}$, the order-$d$ “identity" tensor $\Delta_d$ has $R$-rank greater than or equal to its partition rank, which is equal to $n$.}\end{example}

\begin{example} \label{Example 2} \emph{If $R$ contains only one partition $Q$, and $P$ is not finer than $Q$, then there exists some $J_0 \in P$ which has non-empty intersection with two distinct parts $J_1, J_2 \in Q$. Assume that we have some $\{Q\}$-rank decomposition \begin{equation} \sum_{i=1}^k \prod_{J \in Q} a_{J,i}(x(J)) = \Delta_P. \label{decomposition in example 2} \end{equation} for some nonnegative integer $k$. We then choose an element $y \in \prod_{j \in J_0^c} [n_j]$ such that $y_{j_1} = j_{j_2}$ whenever $j_1,j_2 \in J_0^c$ are in the same part of $P$ (for instance, we can even choose $y$ such that $y_j$ is the same for all $j \in J_0^c$). Specialising the equality \eqref{decomposition in example 2} to the order-$|J_0|$ slices obtained by fixing $x_j$ to be equal to $y_j$ for every $j \in J_0^c$ we obtain \[\sum_{i=1}^k \prod_{J \in Q} a_{J,i}(x(J \cap J_0), y(J \setminus J_0)) = \Delta_{J_0}(x(J_0)).\] Because $J_1 \cap J_0$ and $J_2 \cap J_0$ are not empty, every summand on the left-hand side has partition rank at most $1$, and the left-hand side hence has partition rank at most $k$. Meanwhile we know from Example \ref{Example 1} that the right-hand side has partition rank $n$. So $k \ge n$ and the tensor $\Delta_P$ therefore has $\{Q\}$-rank (and equivalently $R$-rank) at least $n$.} \end{example}

\begin{example} \label{Example 3} \emph{We now allow $R$ to contain more than one partition, but require that there exists some $J_0 \in P$ that is never contained in $J$ for any $Q \in R$ and every $J \in Q$. Assume that we have some $R$-rank decomposition \begin{equation} \sum_{Q \in R} \sum_{i=1}^{r_Q} \prod_{J \in Q} a_{Q,J,i}(x(J)) = \Delta_P \label{decomposition in example 3} \end{equation} for some nonnegative integers $r_Q$ for every $Q \in R$. Then choosing $y$ as in Example \ref{Example 2} and taking the same slice as there we obtain \[\sum_{Q \in R} \sum_{i=1}^{r_Q} \prod_{J \in Q} a_{Q,J,i}(x(J \cap J_0), y(J \setminus J_0)) = \Delta_{J_0}(x(J_0)).\] Our assumption on $J_0$ again ensures that every summand on the left-hand side has partition rank at most $1$ and hence that the left-hand side has partition rank at most $\sum_{Q \in R} r_Q$. Since the right-hand side has partition rank equal to $n$, we have shown that $\Delta_P$ has $R$-rank at least $n$.} \end{example}

\begin{example} \label{Example 4} \emph{At the other extreme, let us assume that there exists a common part $J_0 \subset [d]$ which belongs to $P$ and to every partition $Q \in R$. With a view towards what will be our eventual inductive argument on $d$ in the general case, also assume that Theorem \ref{Diagonal bound theorem} has been shown for all $d'<d$. Suppose that we have some $R$-rank decomposition \eqref{decomposition in example 3} of $\Delta_P$. We may then rewrite it as \begin{equation} \sum_{Q \in R} \sum_{i=1}^{r_Q} A_{Q,i}(x(J_0^c)) a_{Q,J_0,i}(x(J_0)) = \Delta_P \label{decomposition in example 4} \end{equation} for some order-$(|J^c|)$ tensors $A_{Q,i}: \prod_{j \in J_0^c} [n_j] \to \F$. Let $\Delta_{P \setminus \{J_0\}}: \prod_{j \in J_0^c} [n_j] \to \F$ be the tensor defined by \[\Delta_{P \setminus \{J_0\}}(x(J_0^c)) = \prod_{J \in P \setminus \{J_0\}} \Delta_J(x(J)).\] Choosing an element $y \in \prod_{j \in J_0} [n_j]$ such that $y_{j_1} = y_{j_2}$ whenever $j_1,j_2 \in J_0$ and specialising the equality \eqref{decomposition in example 4} to the order-$|J_0^c|$ slices obtained by fixing $x_j$ to be equal to $y_j$ for every $j \in J_0$ we obtain \[ \sum_{Q \in R} \sum_{i=1}^{r_Q} \lambda_{Q,i} A_{Q,i}(x(J_0^c)) = \Delta_{P \setminus \{J_0\}} \] for some $\lambda_{Q,i} \in \F$. We write $R_{\mathrm{comp}}(J_0)$ for the family of partitions $\{Q \setminus \{J_0\}: Q \in R\}$. We have shown that for every nonnegative integer $k$, if $\Delta_P$ has $R$-rank at most $k$ then $\Delta_{P \setminus \{J_0\}}$ has $R_{\mathrm{comp}}(J_0)$-rank at most $k$. Because $P$ is not finer than any of the partitions $Q \in R$, and the part $J_0$ belongs to $P$ and to all $Q \in R$, it follows that $P \setminus \{J_0\}$ is not not finer than any of the partitions in $R_{\mathrm{comp}}(J_0)$. By Theorem \ref{Diagonal bound theorem} applied to $|J_0^c|$, $P \setminus \{J_0\}$, and $R_{\mathrm{comp}}(J_0)$ and conclude that $\Rrk \Delta_{P} \ge n^{c(|J_0^c|)}$.}

\end{example}

\subsection{Proof of the power lower bound in the general case}

Having been largely inspired by the previous examples we now prove Theorem \ref{Diagonal bound theorem} in full generality.

\begin{proof}[Proof of Theorem \ref{Diagonal bound theorem}]

If $P$ is the trivial partition $\{[d]\}$, then $\Delta_P$ is the order-$d$ “identity” tensor $\Delta_d$, and the result follows from Example \ref{Example 1}. We now assume that $P \neq \{[d]\}$. Because $P$ is not finer than any partition in $R$, the set $R$ does not contain the partition $\{[d]\}$. We proceed by induction on the pair $(d,J_{\max}(R))$. 

The base case is the case $d=2$. We now describe the inductive step. Assume that for some value of $d \ge 3$, the result is proved for any pair $(d^*,R^*)$ (where $R^*$ is a non-empty family of partitions of $[d^*]$) with $2 \le d^* < d$ or with $d^*=d$ and $J_{\max}(R^*) < J_{\max}(R)$. We then distinguish two cases.

\textbf{Case 1:} There exists a part $J \in P$ which is not strictly contained in any part of any partition in $R$. We can write \[\Delta_P(x) = \Delta_{P \setminus \{J\}}(x(J^c))  \Delta_J(x(J)).\] Let $k$ be a nonnegative integer. Assume that the $R$-rank of $\Delta_P$ is at most $k$. Then we may write an $R$-rank decomposition \begin{equation} \sum_{i=1}^r A_i(x(J^c)) B_i(x(J)) + \sum_{i=1}^{s} F_i(x) = \Delta_{P \setminus \{J\}}(x(J^c)) \Delta_J(x(J))  \label{R-rank decomposition of Delta_P} \end{equation} of $\Delta_P$ where $r+s \le k$, the functions $A_i$ have $R_{\mathrm{comp}}(J)$-rank at most $1$, the functions $B_i$ are arbitrary, and the functions $F_i$ have $R_-(J)$-rank at most $1$.  Because the partition $P$ is not finer than any partition of $R$, the partition $P \setminus \{J\}$  is not finer than any partition of $R_{\mathrm{comp}}(J)$. By the inductive hypothesis the $R_{\mathrm{comp}}(J)$-rank of $\Delta_{P \setminus \{J\}}$ is at least $n^{c(|J^c|)}$, so the subadditivity of the $R_{\mathrm{comp}}(J)$-rank shows that whenever \[r \le k < n^{c(|J^c|)}\] the tensor $\Delta_{P \setminus \{J\}}$ is necessarily outside the linear span of $A_1,\dots,A_r$ and there hence exists a function $u: [n]^{J^c} \to \F$ with support $U$ with size at most $r+1 \le k+1$ such that $u.A_i = 0$ for each $i \in [r]$ but $u.\Delta_{P \setminus \{J\}} = 1$. Applying $u$ to both sides of \eqref{R-rank decomposition of Delta_P} provides \begin{equation} \sum_{i=1}^{s} (u.F_i)(x(J)) = \Delta_J(x(J)).\label{resulting equality with Delta_J} \end{equation} We may compute the left-hand side of \eqref{resulting equality with Delta_J} as \[\sum_{i=1}^{s} \sum_{y \in U} u(y) (F_{i})(x(J),y).\] It follows from the definition of $R_-(J)$ that every slice $x(J) \to F_{i}(x(J),y)$ has partition rank at most $1$, so the left-hand side of \eqref{resulting equality with Delta_J} has partition rank at most $s(r+1) \le k^2$. However, the partition rank of $\Delta_J$ is equal to $n$, so $k^2 \ge n$. Therefore, in Case 1 we conclude \[\Rrk \Delta_P \ge \min(n^{c(|J^c|)}, n^{1/2}).\]

\textbf{Case 2:} We are not in Case 1, in other words every part of $P$ is strictly contained in some part of some partition of $R$. We apply Proposition \ref{Fragmentation} by taking $R_1 = R$, $R_2 = \{P\}$, and $J = J_{\max}(R)$. We next show that its assumptions are satisfied.

Our Case 2 assumption applied to an arbitrary part of $P$ shows that that part is strictly contained in some part of some partition in $R$; that latter part hence has size at least 2, so $R \neq R_{\tr}$ and hence $R_1 \cup \{P\} \neq R_{\tr}$.

Our Case 2 assumption also shows that in particular if $J_0$ is a part of $P$ with largest size, then it is strictly contained in some part of some partition of $R$, so \[\max \{|J’|: J’ \in Q, Q \in R\} > \max \{|J’|: J’ \in P\}\] and hence \[J_{\max}(R \cup \{P\}) = J_{\max}(R).\] 

Again, by our Case 2 assumption the part $J = J_{\max}(R)$ must have non-empty intersection with at least two parts of the partition $P$: indeed if this were not the case, then $J$ would be contained in some part $J_0 \in P$; since by assumption the part $J_0$ is strictly contained in some part $J_1$ of some partition of $R$, we would then have $|J| \le |J_0| < |J_1|$, contradicting that $J$ has maximal size among the parts of all partitions of $R$ and hence contradicting that $J=J_{\max}(R)$. In other words we have $(\{P\})_-(J) = \{P\}$ and hence $\Delta_P$ has $(\{P\})_-(J)$-rank at most $1$.

From Proposition \ref{Fragmentation} we obtain that if $\Rrk \Delta_P \le k$ for some nonnegative integer $k$, then the $R'(J)$-rank of $\Delta_P$ is at most \[k ((k+1)^2+1) \le (k+1)^3.\] This finishes Case 2.

We iterate Case 2 until we can no longer do so. The number of times that Case 2 may be iterated is bounded by the number of subsets of $[d]$, which is at most $2^d$. At the end of the argument we hence obtain the bound $\Rrk \Delta_P \ge n^{c(d)}$ with $c(d) = 3^{-2^{d+1}}$. \end{proof}

\section{Boundedness of ranks generated by a pair of ranks} \label{Boundedness of ranks generated by a pair of ranks}

This section is devoted to the proof of Theorem \ref{Boundedness for two ranks}.

\subsection{The case of ranks arising from one partition}

We begin by proving Theorem \ref{Boundedness for two ranks} in the special case where $R_1, R_2$ each only contain one partition.

\begin{proposition}

Let $d \ge 2$ be an integer, let $P,Q$ be two partitions of $[d]$, and let $r,s$ be nonnegative integers. Assume that $T$ is an order-$d$ tensor with $\{P\}$-rank and $\{Q\}$-rank equal to respectively $r$ and $s$. Then $T$ has $\{P\} \wedge \{Q\}$-rank at most $(rs)^{|P|} \le (rs)^d$.

\end{proposition}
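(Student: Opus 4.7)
The plan is to imitate the two-sided fragmentation trick of Example~\ref{Example 3 fragmentation}, fragmenting simultaneously along \emph{every} part of $P$ and then collapsing the summation index of the $\{P\}$-decomposition into a scalar coefficient; this absorption is what yields $(rs)^{|P|}$ rather than a bound with an additional factor of $r$.

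I would begin with a minimal-length $\{P\}$-rank decomposition $T(x) = \sum_{i=1}^r \prod_{J \in P} a_{J,i}(x(J))$, setting $A_{J,i}(x(J^c)) := \prod_{J' \in P \setminus \{J\}} a_{J',i}(x(J'))$ for each $J \in P$. A short observation is that minimality forces $(A_{J,i})_{i=1}^r$ to be linearly independent for each $J \in P$ individually: if for instance $A_{J,r} = \sum_{i<r} c_i A_{J,i}$, then replacing $a_{J,i}$ by $a_{J,i} + c_i a_{J,r}$ for each $i < r$ and discarding the last term would yield a $\{P\}$-rank decomposition of length $r-1$, each summand still being a product of functions indexed by the parts of $P$. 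This lets us fix, for each $J \in P$, a dual family $(A^*_{J,i})_{i=1}^r$ supported on some set $U_J \subset \prod_{j \in J^c} [n_j]$ of size $r$. Fixing also a $\{Q\}$-rank decomposition $T(x) = \sum_{j=1}^s \prod_{K \in Q} b_{K,j}(x(K))$ and applying $A^*_{J,i}$ to $T$ yields
\[
a_{J,i}(x(J)) = \sum_{u \in U_J}\sum_{j=1}^s A^*_{J,i}(u) \prod_{K \in Q} b_{K,j}(u(K \cap J^c), x(K \cap J)),
\]
an expression whose $x$-dependence lies entirely in the parts of $\{K \cap J : K \in Q, K \cap J \neq \emptyset\}$.

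The main step would be to substitute these expressions into $T = \sum_i \prod_J a_{J,i}$, expand the product of sums, and interchange the summation over $i$ with the outer summation, yielding
\[
T(x) = \sum_{\substack{(u_J)_J \in \prod_J U_J \\ (j_J)_J \in [s]^{|P|}}} \Bigl(\sum_{i=1}^r \prod_{J \in P} A^*_{J,i}(u_J)\Bigr) \prod_{J \in P}\prod_{K \in Q} b_{K, j_J}(u_J(K \cap J^c), x(K \cap J)).
\]
For each fixed outer index the double product over $(J,K)$ factors: pairs with $K \cap J = \emptyset$ contribute scalars, and for each $L \in P \wedge Q$ there is a unique pair $(J,K) \in P \times Q$ with $K \cap J = L$, contributing a single factor depending only on $x(L)$. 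Every summand therefore has $\{P \wedge Q\}$-rank at most $1$, and the number of outer summands is $\prod_{J \in P}(|U_J| \cdot s) = (rs)^{|P|}$, which is the claimed bound. The main point to verify carefully is the linear-independence step enabling dual functions, which is delivered by the minimality reduction above; the combinatorial check that the double product factors cleanly through $P \wedge Q$ is routine once set up.
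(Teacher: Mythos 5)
Your proposal is correct and matches the paper's proof essentially step for step: minimal-length $\{P\}$-decomposition, linear independence of the complementary products $A_{J,i}$, small-support dual functions, contraction against the $\{Q\}$-decomposition, and expansion of the product of sums to get $(rs)^{|P|}$ rank-one terms for $\{P \wedge Q\}$. You spell out the minimality-forces-independence reduction a bit more explicitly than the paper (which merely asserts it), but the idea and bound are identical.
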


\begin{proof} We consider two respective $\{P\}$-rank and $\{Q\}$-rank decompositions \begin{equation} \sum_{i=1}^r \prod_{J_1 \in P} a_{i,J_1}(x(J_1)) \text{ and }  \sum_{j=1}^s \prod_{J_2 \in Q} b_{j,J_2}(x(J_2)) \label{two decompositions each with one partition} \end{equation} of $T$, both with minimal lengths. Let $J_1 \in P$ be fixed. Then the family of products \[A_{1,J_1} = \prod_{J \in P \setminus \{J_1\}} a_{1,J}, \dots, A_{r,J_1} = \prod_{J \in P \setminus \{J_1\}} a_{r,J} \] with $i \in [r]$ is linearly independent, as can be seen by assuming for contradiction that this is not the case and rewriting the $\{P\}$-rank decomposition from \eqref{two decompositions each with one partition} into a decomposition with smaller length. We fix a family $(A_{1,J_1}^*, \dots, A_{r,J_1}^*)$ of dual functions to the family $(A_{1,J_1}, \dots, A_{r,J_1})$ supported inside some subset $U_{J_1} \subset \prod_{j \in J_1^c} [n_j]$ with size $r$. For every $i \in [r]$, applying $A_{i,J_1}^*$ to the equality between the two decompositions \eqref{two decompositions each with one partition} we get \[a_{i,J_1} = \sum_{j=1}^s (A_{i,J_1}^*. \prod_{J_2 \in Q} b_{j,J_2})\] and hence \[a_{i,J_1}(x(J_1)) =  \sum_{j=1}^s \sum_{u_{J_1} \in U_{J_1}} A_{i,J_1}^*(u_{J_1}) \prod_{J_2 \in Q} b_{j,J_2}(x(J_1 \cap J_2), u_{J_1}(J_2 \setminus J_1)).\] Substituting in the first of the decompositions \eqref{two decompositions each with one partition} we obtain \[T(x_1, \dots, x_d) = \sum_{i=1}^r \prod_{J_1 \in P} \left( \sum_{j=1}^s \sum_{u_{J_1} \in U_{J_1}} A_{i,J_1}^*(u_{J_1}) \prod_{J_2 \in Q} b_{j,J_2}(x(J_1 \cap J_2), u_{J_1}(J_2 \setminus J_1)) \right). \] This decomposition can in turn be expanded as \[ \sum_{j \in [s]^{P}} \sum_{(u_{J_1})_{J_1 \in P}}\left(\sum_{i=1}^r \prod_{J_1 \in P} A_{i,J_1}^*(u_{J_1})\right) \left( \prod_{J_1 \in P} \prod_{J_2 \in Q}  b_{j,J_2}(x(J_1 \cap J_2), u_{J_1}(J_2 \setminus J_1)) \right).\] The family \[\left( \prod_{J_1 \in P} \prod_{J_2 \in Q}  b_{j,J_2}(x(J_1 \cap J_2), u_{J_1}(J_2 \setminus J_1)): j \in [s]^{P}, (u_{J_1})_{J_1 \in P} \in \prod_{J_1 \in P} U_{J_1} \right) \] is a family of at most $r^{|P|} s^{|P|}$ (order-$d$) tensors with $\{P_1 \wedge P_2\}$-rank at most $1$. The claim follows. \end{proof}

\subsection{Proof of boundedness in the general case}

\begin{proof}[Proof of Theorem \ref{Boundedness for two ranks}]

We finally prove Theorem \ref{Boundedness for two ranks} in the general case of arbitrary $R_1, R_2$.

Let $R_1$, $R_2$ be non-empty families of partitions of $[d]$. If $R_1$ or $R_2$ contains the trivial partition $\{[d]\}$, then we can assume without loss of generality that $R_1$ does. The set $R_1 \wedge R_2$ then contains $R_2$, so the $R_1 \wedge R_2$-rank is at most the $R_2$-rank (in fact, both are equal) and the result is immediate. Throughout the remainder of the proof we assume that neither $R_1$ nor $R_2$ contains the partition $\{[d]\}$. This proof proceeds by induction on the pair $(d,J_{\max}(R_1 \cup R_2))$. 

The base case is the case where $R_1$ and $R_2$ are both equal to $R_{\tr}$. Then in particular $R_1 = R_2$, so $R_1 \wedge R_2 = R_1 = R_2$, and a $R_1$-rank decomposition of $T$ with length $k_1$ hence also provides a $R_1 \wedge R_2$-rank decomposition of $T$ with length $k_1$.

We now describe the inductive step. Assume that for some value of $d \ge 3$ and for some non-empty families $R_1$, $R_2$ of partitions of $[d]$ with $R_1 \cup R_2 \neq R_{\tr}$, the result is proved for any triple $(d^*,R_1^*, R_2^*)$ (where $R_1^*, R_2^*$ are non-empty families of partitions of $[d^*]$) satisfying $2 \le d^* < d$ or satisfying $d^*=d$ and $J_{\max}(R_1^* \cup R_2^*) < J_{\max}(R_1 \cup R_2)$.

Let $T$ be an order-$d$ tensor with two respective $R_1$-rank and $R_2$-rank decompositions \begin{align} \sum_{i=1}^{r_1} A_i^1(x(J^c)) B_i^1(x(J)) + \sum_{i=1}^{s_1} F_i^1(x) \label{decomposition 1} \\ \sum_{i=1}^{r_2} A_i^2(x(J^c)) B_i^2(x(J)) + \sum_{i=1}^{s_2} F_i^2(x) \label{decomposition 2} \end{align} with respective lengths $k_1 = r_1+s_1$ and $k_2 = r_2+s_2$, where as usual the functions $A_i^1, A_i^2$ have $R_{1\mathrm{comp}}(J)$-rank and $R_{2\mathrm{comp}}(J)$-rank at most $1$ respectively, and the functions $F_i^1, F_i^2$ have $R_{1-}(J)$-rank and $R_{2-}(J)$-rank at most $1$ respectively. Let $J = J_{\max}(R_1 \cup R_2)$. As ever we may assume that the families of functions $(A_1^1,\dots, A_{r_1}^1)$ and $(A_1^2,\dots, A_{r_2}^2)$ are each linearly independent in \eqref{decomposition 1} and \eqref{decomposition 2}.

If the functions \begin{equation} A_1^1, \dots, A_{r_1}^1, A_1^2, \dots, A_{r_2}^2 \label{special case where the functions are all linearly independent} \end{equation} are all linearly independent then we conclude the inductive step in rather short order. Indeed Proposition \ref{Fragmentation} applied to the decompositions \eqref{decomposition 1} and \eqref{decomposition 2} then shows that the $R_1’(J)$-rank of $T$ is at most $k_1 ((k_1+k_2)^2 + 1)$ and likewise that the $R_2’(J)$-rank of $T$ is at most $k_2 ((k_1+k_2)^2 + 1)$. Since \[J_{\max}(R_1’(J), R_2’(J)) < J\] the inductive hypothesis then shows that the $R_1’(J) \wedge R_2’(J)$-rank of $T$ is at most \[C_{d, R_1’(J), R_2’(J)}(k_1 ((k_1+k_2)^2 + 1), k_2 ((k_1+k_2)^2 + 1)).\] Since $R_1’(J) \prec R_1$ and $R_2’(J) \prec R_2$ we have \[R_1’(J) \wedge R_2’(J) \prec R_1 \wedge R_2,\] so by Proposition \ref{Refinements do not decrease rank} the same bound on the $R_1 \wedge R_2$-rank of $T$ then follows.

The general case of the inductive step, where the functions \eqref{special case where the functions are all linearly independent} are not assumed to be linearly independent, proceeds by reducing to this case, through another, inner induction, as we will now explain. This inner induction will involve working with the first parts \begin{align} \sum_{i=1}^{r_1} & A_i^1(x(J^c)) B_i^1(x(J)) \label{first part of decomposition 1} \\ \sum_{i=1}^{r_2} & A_i^2(x(J^c)) B_i^2(x(J)) \label{first part of decomposition 2} \end{align} of the decompositions \eqref{decomposition 1} and \eqref{decomposition 2} respectively, and will not involve the functions $F_i^1, F_i^2$.

Assume that the functions \eqref{special case where the functions are all linearly independent} are not all linearly independent. Then we have \begin{equation} \sum_{i=1}^{r_1} \a_i A_i^1 = \sum_{i=1}^{r_2} \b_i A_i^2 \label{common tensor in the first linear dependence} \end{equation} for some $\a \neq 0$ and $\b \neq 0$. Applying Theorem \ref{Boundedness for two ranks} with $|J^c|$, $R_{1 \mathrm{comp}}(J)$ and $R_{2 \mathrm{comp}}(J)$ we obtain without loss of generality that both sides of \eqref{common tensor in the first linear dependence} can be written as a linear combination of tensors in some family $\ct_{\wedge, 1}(J)$ of at most \[t_1 \le C_{|J^c|, R_{1 \mathrm{comp}}(J), R_{2 \mathrm{comp}}(J)}(r_1, r_2)\] tensors each with $R_{1 \mathrm{comp}}(J) \wedge R_{2 \mathrm{comp}}(J)$-rank equal to $1$. We write \[\ct_{\wedge, 1}(J) = (A_1, \dots, A_{t_1}).\] Without loss of generality we can assume that all functions in $\ct_{\wedge, 1}(J)$ are linearly independent and that $\a_{r_1}^1 \neq 0$. The equality \eqref{common tensor in the first linear dependence} hence allows us to rewrite $A_{r_1}^1$ as a linear combination of $A_{1}^1, \dots, A_{r_1-1}^1$ and of the tensors in $\ct_{\wedge, 1}(J)$, so we may rewrite \eqref{first part of decomposition 1} without involving the function $A_{r_1}$, as \[ \sum_{i=1}^{r_1-1} A_i^1(x(J^c)) B_i^1(x(J)) + \sum_{i=1}^{t_1} \mu_i A_i(x(J^c)) B_{r_1}^1(x(J)) \] for some $\mu_i \in \F$. If the functions \begin{equation} A_1^1, \dots, A_{r_1-1}^1, A_2^1, \dots, A_{r_2}^2, A_1, \dots, A_{t_1} \label{linear independence after first linear dependence} \end{equation} are all linearly independent, then we stop.

If on the other hand the functions \eqref{linear independence after first linear dependence} are linearly dependent, then we move to the next step in the inner induction as follows. We have a linear relation of the type \begin{equation} \sum_{i=1}^{r_1-1} \a_i A_i^1 = \sum_{i=1}^{r_2} \b_i A_i^2 + \sum_{i=1}^{t_1} \g_i A_i \label{common tensor in the second linear dependence} \end{equation} for some $\a, \b, \g$ not all zero. Applying a second time Theorem \ref{Boundedness for two ranks} with $|J^c|$, $R_{1 \mathrm{comp}}(J)$ and $R_{2 \mathrm{comp}}(J)$ to \eqref{common tensor in the second linear dependence} shows that both sides are linear combinations of tensors in some family $\ct_{\wedge, 2}(J)$ of at most \[t_2 \le C_{|J^c|, R_{1 \mathrm{comp}}(J), R_{2 \mathrm{comp}}(J)}(r_1, r_2 + t_1)\] tensors each with $R_{1 \mathrm{comp}}(J) \wedge R_{2 \mathrm{comp}}(J)$-rank equal to $1$. Without loss of generality we can assume that the tensors in $\ct_{\wedge, 2}(J)$ are linearly independent, and that $\ct_{\wedge, 2}(J)$ contains $A_1, \dots, A_{t_1}$, at the cost of increasing the bound on the size of $\ct_{\wedge, 2}(J)$ by $t_1$. We write \[\ct_{\wedge, 2}(J) = (A_1, \dots, A_{t_2}).\] Because the functions $A_1, \dots, A_{t_1}$ are linearly independent we have $\a \neq 0$ or $\b \neq 0$. If $\a \neq 0$ then without loss of generality we can write $A_{r_1-1}^1$ as a linear combination of \begin{equation} A_1^1, \dots, A_{r_1-2}^1, A_1^2, \dots, A_{r_2}^2, A_1, \dots, A_{t_2}. \label{linear independence after second step, situation 1} \end{equation} Otherwise without loss of generality we can write $A_{r_2}^2$ as a linear combination of \begin{equation} A_1^1, \dots, A_{r_1-1}^1, A_1^2, \dots, A_{r_2-1}^2, A_1, \dots, A_{t_2} \label{linear independence after second step, situation 2}.\end{equation} These situations respectively lead to \eqref{first part of decomposition 1} (resp. \eqref{first part of decomposition 2}) being rewritten respectively as \begin{align*} \sum_{i=1}^{r_1-2} A_i^1(x(J^c)) B_i^1(x(J)) &+ \sum_{i=1}^{t_2} \mu_i' A_i(x(J^c)) B_{r_1-1}^1(x(J)) + \sum_{i=1}^{t_1} \mu_i A_i(x(J^c)) B_{r_1}^1(x(J)) \\ \sum_{i=1}^{r_2-1} A_i^2(x(J^c)) B_i^2(x(J)) &+ \sum_{i=1}^{t_2} \mu_i'' A_i(x(J^c)) B_{r_2}^2(x(J)) \end{align*} for some $\mu_i'\in \F$ (resp. for some $\mu_i'' \in \F$). In the first and second of these two situations we then stop if the functions \eqref{linear independence after second step, situation 1}, resp. \eqref{linear independence after second step, situation 2} are linearly independent, and otherwise we continue. As long as we continue, after $\tau$ iterations in the inner induction we have without loss of generality rewritten \eqref{first part of decomposition 1} and \eqref{first part of decomposition 2} respectively as \begin{align} \sum_{i=1}^{r_1-m_1} A_i^1(x(J^c)) B_i^1(x(J)) & + \sum_{i=1}^{t_\tau} A_{i}(x(J^c)) B_i^{1, \tau}(x(J)) \label{decomposition 1 after general step} \\
\sum_{i=1}^{r_2-m_2} A_i^2(x(J^c)) B_i^2(x(J)) & + \sum_{i=1}^{t_\tau} A_{i}(x(J^c)) B_i^{2, \tau}(x(J)) \label{decomposition 2 after general step} \end{align} for some linearly independent family \[\ct_{\wedge, \tau}(J) = (A_1, \dots, A_{t_\tau}),\] some nonnegative integers $m_1, m_2$ satisfying $m_1 + m_2 = \tau$, and some functions $B_i^{1, \tau}, B_i^{2, \tau}$. If the functions \begin{equation} A_1^1, \dots, A_{r_1-m_1}^1, A_2^1, \dots, A_{r_2-m_2}^2, A_1, \dots, A_{t_\tau} \label{functions linearly independent at the end} \end{equation} are linearly independent then we stop. 

Otherwise we write a linear dependence relation \begin{equation} \sum_{i=1}^{r_1-m_1} \a_i A_i^1 = \sum_{i=1}^{r_2-m_2} \b_i A_i^2 + \sum_{i=1}^{t_{\tau}} \g_i A_i \label{common tensor in the linear dependence of the general step} \end{equation} for some $\a, \b, \g$, with $\a \neq 0$ or $\b \neq 0$. Again, applying Theorem \ref{Boundedness for two ranks} with $|J^c|$, $R_{1 \mathrm{comp}}(J)$ and $R_{2 \mathrm{comp}}(J)$ to \eqref{common tensor in the linear dependence of the general step} shows that both sides are linear combinations of the tensors in some extension \[\ct_{\wedge, \tau+1}(J) = (A_1, \dots, A_{t_\tau})\] of $\ct_{\wedge, \tau}(J)$ with \[t_\tau \le C_{|J^c|, R_{1 \mathrm{comp}}(J), R_{2 \mathrm{comp}}(J)}(r_1, r_2 + t_{\tau}).\] If $\a \neq 0$ or $\b \neq 0$ then we may rewrite \eqref{first part of decomposition 1} (resp. \eqref{first part of decomposition 2}) as \eqref{decomposition 1 after general step} (resp. \eqref{decomposition 2 after general step}) with $m_1$ (resp. $m_2$) incremented by $1$, at the cost of the range of the second sum extending from $[t_{\tau}]$ to $[t_{\tau + 1}]$ (and with new functions $B_i^{1, \tau+1}$, $B_i^{2, \tau+1}$). This finishes the iteration $\tau + 1$ of the inner induction.

If we have not stopped before the end of the iteration $r_1+r_2$, then we necessarily stop thereafter, since for every $\tau$ the family $(A_1, \dots, A_{t_{\tau}})$ is linearly independent and hence so is $(A_1, \dots, A_{t_{r_1+r_2}})$ in particular. Assume that we have stopped the inner induction after $\tau$ iterations, and have obtained \eqref{decomposition 1 after general step} and \eqref{decomposition 2 after general step}. We write $t = t_{\tau}$ and now describe the remainder of the outer inductive step. 

We have established that the tensor \[T' = T -\sum_{i=1}^t A_{i}(x(J^c)) B_{i}^{2, \tau} (x(J)) \] admits the $R_1$-rank and $R_2$-rank decompositions \begin{align*} \sum_{i=1}^{r_1-m_1} A_i^1(x(J^c)) B_i^1(x(J)) &+ \sum_{i=1}^{s_1} F_i^1(x) + \sum_{i=1}^t A_{i}(x(J^c)) B_i(x(J))\\ \sum_{i=1}^{r_2-m_2} A_i^2(x(J^c)) B_i^2(x(J)) &+ \sum_{i=1}^{s_2} F_i^2(x) \end{align*} for some functions $B_i$. The functions \eqref{functions linearly independent at the end} are linearly independent, so applying Proposition \ref{Fragmentation} to these two decompositions shows that $T’$ has $R_1’(J)$-rank at most \[(k_1 + t)((k_1+t+k_2)^2 + 1),\] and similarly that $T’$ has $R_2’(J)$-rank at most \[k_2((k_1+t+k_2)^2 + 1).\] The inductive hypothesis then shows that the $R_1’(J) \wedge R_2’(J)$-rank of $T’$ is at most \begin{equation} C_{d, R_1’(J), R_2’(J)}((k_1 + t)((k_1+t+k_2)^2 + 1), k_2 ((k_1+t+k_2)^2 + 1)) \label{almost last bound} \end{equation} and the $R_1 \wedge R_2$-rank of $T’$ is hence bounded above by the same value. Since the functions $A_i$ with $i \in [t]$ have $R_{1\mathrm{comp}}(J) \wedge R_{2\mathrm{comp}}(J)$-rank equal to $1$, the tensors $A_i B_i$ with $i \in [t]$ each have $R_1 \wedge R_2$-rank equal to $1$, so the difference $T-T'$ has $R_1 \wedge R_2$-rank at most $t$. By subadditivity of the rank we then conclude that the $R_1 \wedge R_2$-rank of $T$ is at most $t$ greater than \eqref{almost last bound}.

This completes the outer inductive step. As there are at most $2^d$ possibilities for $J_{\max}(R_1 \cup R_2)$, the number of iterations of that outer inductive step is at most that, within a given value of $d$. \end{proof}

\end{document}